\newenvironment{customthm}[1]
  {\innercustomthm}
  {\endinnercustomthm}
\newtheorem{theorem}{Theorem}[section]
\newtheorem{lemma}[theorem]{Lemma}
\newtheorem{corollary}[theorem]{Corollary}
\newtheorem*{corollary*}{Corollary}
\newtheorem{proposition}[theorem]{Proposition}
\newtheorem{claim}[theorem]{Claim}
\newtheorem{conjecture}[theorem]{Conjecture}
\theoremstyle{definition}
\newtheorem{remark}[theorem]{Remark}
\newtheorem*{remark*}{Remark}
\newtheorem{definition}[theorem]{Definition}
\newtheorem*{definition*}{Definition}
\newtheorem{assumption}[theorem]{Assumption}
\def \U {\mathcal U}
\def \C {\mathcal C}
\def \PP {\mathbb P}
\def\GL{\operatorname{GL}}
\def\sym{\operatorname{Sym}}
\def\GR{\operatorname{GR}}
\def\PGL{\operatorname{PGL}}
\def\DCF{\operatorname{DCF}}
\def\ACF{\operatorname{ACF}}
\def\ccm{\operatorname{CCM}}
\def\dcl{\operatorname{dcl}}
\def\acl{\operatorname{acl}}
\def\alg{\operatorname{alg}}
\def\id{\operatorname{id}}
\def\tp{\operatorname{tp}}
\def\cb{\operatorname{Cb}}
\def\RM{\operatorname{RM}}
\def\aut{\operatorname{Aut}}
\def\gtd{\operatorname{gtd}}
\def\C{\mathcal C}
\def\stab{\operatorname{Stab}}
\def\nmdeg{\operatorname{nmdeg}}
\def\Ind#1#2{#1\setbox0=\hbox{$#1x$}\kern\wd0\hbox to 0pt{\hss$#1\mid$\hss}
\lower.9\ht0\hbox to 0pt{\hss$#1\smile$\hss}\kern\wd0}
\def\ind{\mathop{\mathpalette\Ind{}}}
\def\Notind#1#2{#1\setbox0=\hbox{$#1x$}\kern\wd0\hbox to 0pt{\mathchardef
\nn=12854\hss$#1\nn$\kern1.4\wd0\hss}\hbox to
0pt{\hss$#1\mid$\hss}\lower.9\ht0 \hbox to
0pt{\hss$#1\smile$\hss}\kern\wd0}
\def\nind{\mathop{\mathpalette\Notind{}}}
\newcommand{\m}{\mathbb }
\newcommand{\mc}{\mathcal }
\title[Bounding nonminimality]{Bounding nonminimality and\\ a conjecture of  Borovik-Cherlin}
\author{James Freitag}
\address{James Freitag\\
University of Illinois Chicago\\ 
Department of Mathematics, Statistics,
and Computer Science\\ 
851 S. Morgan Street\\
Chicago, IL, 60607-7045\\
USA}
\email{jfreitag@uic.edu}
\author{Rahim Moosa}
\address{Rahim Moosa\\
University of Waterloo\\
Department of Pure Mathematics\\
200 University Avenue West\\
Waterloo, Ontario \  N2L 3G1\\
Canada}
\email{rmoosa@uwaterloo.ca}
\subjclass[2020]{03C45, 14L30, 12H05, 32J99}
\thanks{The first author was partially supported by NSF grant DMS-1700095 and NSF CAREER award 1945251. The second author was partially supported by an NSERC Discovery Grant.}
\date{\today}
\begin{document}

\begin{abstract}
Motivated by the search for methods to establish strong minimality of certain low order algebraic differential equations, a measure of how far a finite rank stationary type is from being minimal is introduced and studied:
The {\em degree of nonminimality} is the minimum number of realisations of the type required to witness a nonalgebraic forking extension.
Conditional on the truth of a conjecture of Borovik and Cherlin on the generic multiple-transitivity of homogeneous spaces definable in the stable theory being considered, it is shown that the nonminimality degree is bounded by the $U$-rank plus $2$.
The Borovik-Cherlin conjecture itself is verified for algebraic and meromorphic group actions, and a bound of $U$-rank plus $1$ is then deduced unconditionally for differentially closed fields and compact complex manifolds.
An application is given regarding transcendence of solutions to algebraic differential equations.
\end{abstract}

\maketitle

\setcounter{tocdepth}{1}
\tableofcontents

\bigskip
\section{Introduction}

\noindent
Strongly minimal sets -- namely definable sets all of whose definable subsets are either finite or cofinite -- play a central role in model theory and its applications.
Often significant effort is put into proving that certain specific definable sets of interest are strongly minimal.
This is especially true in the model theory of differentially closed fields of characteristic zero ($\DCF_0$), where showing strong minimality of certain algebraic differential equations has solved several long-standing transcendence problems.
Recent examples include:
\begin{itemize}
\item
{\em The generic Painlev\'e equations.}
As pointed out by Nagloo and Pillay~\cite{nagloo2014algebraic}, strong minimality comes out of the work of the Japanese school (Okamoto, Nishioka, Noumi, Umemura, Watanabe) in the 1980s and '90s.
\item
{\em Generic order $2$ equations over the constants.}
Strong minimality has been recently established by Jaoui~\cite{jaoui2019generic}.
\item
{\em The Schwarzian equation of the $j$-function.}
The first author, first with Scanlon in~\cite{freitag2017strong}, used Pila's Ax-Lindemann-Weierstrass theorem to prove strong minimality, and then with Casale and Nagloo in~\cite{cfn}, gave a new proof that generalises to the uniformisers of certain Fuchsian groups.
\end{itemize}
The proofs of strong minimality in the above cases vary significantly and are quite specific, not just to the theory of differentially closed fields, but to the particular equation being studied.
They are also difficult. 

This paper is motivated by the search for general techniques that might aid in establishing strong minimality.
While we remain primarily interested in algebraic differential equations, our basic approach is abstract and stability-theoretic.
Recall that a complete type is {\em minimal} if it has no nonalgebraic forking extensions.
It is well known that, in a stable theory, if $p$ is not minimal then a nonalgebraic forking extension can be found over a finite set of realisations of $p$ itself.
We thus propose the following naive measure of nonminimality:

\begin{definition*}[Degree of nonminimality]
Suppose $p\in S(A)$ is a nonalgebraic and nonminimal stationary type.
By the {\em degree of nonminimality} of $p$, denoted by $\nmdeg(p)$, we mean the least~$k$ such that $p$ has a nonalgebraic forking extension over $A\cup\{a_1,\dots,a_k\}$, for some $a_1,\dots,a_k$ realising $p$.
\end{definition*}

We wish to bound the degree of nonminimality in terms of $U$-rank.
If the bound is good, and the $U$-rank is small\footnote{For example, the order of the algebraic differential equation bounds the $U$-rank in $\DCF_0$, and the examples given at the beginning of the Introduction were all of order two or three.}, then this can significantly limit the parameter spaces one needs to consider when proving minimality.

In some cases we succeed unconditionally:

\begin{customthm}{A}
\label{A}
In $\DCF_0$, and in the theory $\ccm$ of compact complex manifolds, every complete stationary type of finite rank satisfies $\nmdeg(p)\leq U(p)+1$.
\end{customthm}

This appears in Theorems~\ref{bound-dcf} and~\ref{bound-ccm}, below.

Here is an application of Theorem~\ref{A} to questions of transcendence for solutions to algebraic differential equations.
Given an algebraic differential equation of order~$n$ over a differential field $k$, consider the following condition:
\begin{itemize}
\item[$(C_m)$]
For any $m$ distinct solutions $a_1,\dots,a_m\notin k^{\alg}$, the sequence
$$(\delta^{(i)}a_j:i=0,\dots,n-1,j=1,\dots,m)$$
is algebraically independent over $k$.
\end{itemize}
Note that $(C_m)$ increases in strength as $m$ increases.
This condition has been studied for many particular classes of equations.
For example, a consequence of the work in~\cite{nagloo2014algebraic} mentioned above is that the generic Painlev\'e equations satisfy $(C_m)$ for all $m$.
It is shown in~\cite{jaoui2019generic} that for generic order~2 equations over constant parameters, $(C_2)\implies(C_m)$ for all $m$.
The same holds for the (order~3) Schwarzian equations corresponding to the Fuchsian groups studied in~\cite{cfn}, and in that case it is understood when $(C_2)$ holds -- for example, it follows from~\cite{freitag2017strong} that $(C_2)$ fails of the equation satisfied by the $j$-function.

Our Theorem~\ref{A} implies:

\begin{corollary*}
Suppose $k$ is a differential field, $n>1$, and
\begin{equation}
\label{ode}
P(x,\delta x,\dots,\delta^nx)=0
\end{equation}
is an order $n$ algebraic differential equation where $P\in k[X_0,\dots,X_n]$ is irreducible.
Then $(C_{n+2})\implies(C_m)$ for all $m$.
\end{corollary*}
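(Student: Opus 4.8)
The plan is to use Theorem~\ref{A} to force the generic type of the equation to be minimal, then to upgrade minimality to \emph{triviality} using the fine structure of minimal types in $\DCF_0$, and finally to read off $(C_m)$ for all $m$ from triviality together with the instances of $(C_{n+2})$ already in hand. First I would pass to the base $K=k^{\alg}$: transcendence degree is insensitive to algebraic extension of the base, and nonforking over a set agrees with nonforking over its algebraic closure, so each $(C_m)$ is literally unchanged, while types over $K$ become stationary. In the coordinates $(x,\delta x,\dots,\delta^n x)$ the equation cuts out a variety of dimension $n$, and since $\delta^n a$ is algebraic over $k(a,\dots,\delta^{n-1}a)$ every solution has order at most $n$; thus the generic type $p$ of the solution set satisfies $U(p)\le n$. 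As $(C_{n+2})$ entails $(C_1)$, every solution outside $K$ has order exactly $n$, and therefore $(C_m)$ is equivalent to the statement that any $m$ distinct solutions outside $K$ are mutually independent over $K$ -- forking-independence in $\DCF_0$ being exactly algebraic disjointness of the generated differential fields, and each solution contributing $n$ to the transcendence degree.

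Next I would show that $(C_{n+2})$ forces $p$ to be minimal. If it were not, Theorem~\ref{A} would give $\nmdeg(p)\le U(p)+1\le n+1$, so $p$ would have a nonalgebraic forking extension over $\{a_1,\dots,a_j\}$ for some realisations with $j\le n+1$. A realisation $a_{j+1}$ of that extension is a solution outside $K$, distinct from $a_1,\dots,a_j$, with $a_{j+1}\nind_K a_1\cdots a_j$; discarding any repetitions, this exhibits at most $n+2$ distinct solutions outside $K$ that fail to be mutually independent, contradicting $(C_{n+2})$. Hence $p$ is minimal.

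The crux, and the step I expect to be the main obstacle, is to promote minimality to triviality of $p$; this is where purely combinatorial use of $(C_{n+2})$ does not suffice, since a matroid can have no small circuits yet fail to be free. For this I would appeal to the structure theory of minimal types in $\DCF_0$ (the Zilber trichotomy, as established by Hrushovski--Sokolovic): a nontrivial minimal type is either nonorthogonal to the field of constants or locally modular and nontrivial. In the first case its geometry carries the structure of an algebraically closed field of characteristic $0$, furnishing independent $a,b$ with $a+b$ interalgebraic over them; in the second it carries a projective geometry over an infinite division ring, furnishing three collinear points. Either way one obtains three pairwise-independent realisations $a,b,c$ with $c\in\acl(K,a,b)$, that is, three distinct solutions outside $K$ that are not mutually independent, contradicting $(C_3)$ -- an instance of $(C_{n+2})$ since $n\ge 2$. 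Thus $p$ is trivial. Should the variety acquire several conjugate components over $K$, the same counting shows each generic type is minimal and trivial, while $(C_2)$ forces distinct components to be orthogonal, so that the pregeometry on all solutions outside $K$ is the direct sum of trivial geometries.

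With $p$ minimal and trivial the conclusion is immediate. Given distinct solutions $a_1,\dots,a_m$ outside $K$, triviality of the pregeometry says that $\acl(K,a_1,\dots,a_m)$ meets the solution set in nothing beyond $\bigcup_i \acl(K,a_i)$, while $(C_2)$ says no two distinct solutions are interalgebraic over $K$; together these give $a_i\notin\acl(K,\{a_j:j\ne i\})$ for each $i$, which for a minimal type is exactly mutual independence. This is $(C_m)$, and since $m$ was arbitrary the corollary follows. The only genuinely delicate points are the reduction establishing triviality, which leans on the classification of minimal types rather than on $(C_{n+2})$ alone, and the bookkeeping needed to identify $(C_m)$ with mutual independence over $K$ across possibly several conjugate components.
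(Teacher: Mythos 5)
Your overall skeleton matches the paper's: pass to the generic type $p$ of the solution set, use Theorem~\ref{A} together with the counting argument to deduce minimality of $p$ from $(C_{n+2})$, then upgrade minimality to triviality, and finally conclude via ``trivial plus $(C_2)$ implies $(C_m)$ for all $m$.'' Your minimality step and your final step are essentially identical to the paper's and are correct.

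The genuine gap is in the triviality step. You claim that nontriviality of the minimal type yields three pairwise-independent realisations $a,b,c$ with $c\in\acl(K,a,b)$, contradicting $(C_3)$. This is unjustified, and in fact false: the field/group structure coming from the trichotomy lives, in general, only over \emph{additional parameters}, not over $K$. Nonorthogonality of two minimal types yields a finite-to-finite correspondence over the common base only when both are \emph{modular} over that base (this is the fact from Geometric Stability Theory, Corollary~2.5.5, that the paper invokes); your $p$ is a priori only \emph{locally} modular, so to get a correspondence with the modular group $G$ one must first name a realisation $a_1$ of $p$ (localisation at a generic point is exactly what makes $p$ modular). The group law on $G$ then produces distinct $a_2,a_3,a_4$ with $a_4\in\acl(k,a_1,a_2,a_3)$: a four-point dependence, i.e.\ a failure of $(C_4)$, not of $(C_3)$ --- and this is precisely why the corollary needs $n>1$, so that $n+2\geq 4$. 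That nontriviality genuinely does not force a failure of $(C_3)$ is witnessed by the paper's own remark on the Riccati equation: it is minimal and nonorthogonal to the constants --- as nontrivial as a geometry can be --- yet satisfies $(C_3)$, since its binding group $\PGL_2$ acts $3$-transitively on the solution set. For the same reason your ``first case'' (nonorthogonal to the constants) cannot be dispatched by $(C_3)$; instead it should be ruled out outright when $n>1$ by a transcendence-degree count: a generically finite-to-finite correspondence with the constants over any parameter set would force generic solutions to have transcendence degree $1$ rather than $n$, which is how the paper concludes that $V$ must be (locally) modular. A further minor point: working over $K=k^{\alg}$ creates the several-components problem you acknowledge at the end, and your appeal to $(C_2)$ to force orthogonality of distinct components has the same defect (dependence between nonorthogonal minimal types may only appear over extra parameters); the paper sidesteps all of this by staying over $k$, where irreducibility of $P$ together with $(C_1)$ gives Kolchin irreducibility, hence a single generic type realised by every nonalgebraic solution.
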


\begin{proof}
We work in $\DCF_0$, denote by $V$ the Kolchin closed set defined by ~(\ref{ode}), and assume that $(C_{n+2})$ holds of $V$.
We argue that $V$ is a trivial strongly minimal set.
This will suffice as trivial strongly minimal sets, more or less by definition, satisfy $(C_2)\implies(C_m)$ for all $m$.

In particular, $(C_1)$ holds of $V$.
This already implies that $V$ has no infinite Kolchin closed subsets over $k$ of order less than $n$.
Together with irreducibility of $P$, this implies that $V$ is Kolchin irreducible\footnote{Irreducibility follows, for example, from the structure of prime differential ideals in one variable explained on pages 39--41 of~\cite{mmp}} and that all the nonalgebraic points realise the Kolchin generic type $p\in S(k)$.
We claim that $p$ is minimal.
Indeed, if not, then let $d:=\nmdeg(p)$.
There are realisations $a_1,\dots,a_d\models p$ and a nonalgebraic forking extension $q\in S(ka_1,\dots,a_d)$ of $p$.
Letting $a_{d+1}$ be a realisation of $q$, we have that $(a_1,\dots,a_{d+1})$ witnesses the failure of $(C_{d+1})$.
By Theorem~\ref{A}, $d\leq U(p)+1\leq n+1$.
But this contradicts the assumption that $(C_{n+2})$ holds.

As every nonalgebraic point in $V$ realises the minimal type $p$, we conclude that~$V$ itself is a strongly minimal set.
It now follows from well known facts, together with $(C_4)$, that the geometry of $V$ is trivial.
First of all, since $n>1$, $V$ must be (locally) modular.
If it is nontrivial then by work of Hrushovski and Sokolovic, it is nonorthogonal to a modular group $G$ defined over $k^{\alg}$.
(See, for example Fact~4.1 of~\cite{isolated} and the discussion following.)
In fact, there is a generically finite-to-finite correspondence between $V$ and $G$ over $\acl(ka_1)$, where $a_1\in V$.
(We are using here that nonorthogonal modular minimal sets are not weakly orthogonal, see~\cite[Corollary~2.5.5]{GST}).
The group structure on $G$ gives distinct $a_2,a_3,a_4\in V$ such that $a_4\in\acl(k,a_1,a_2,a_3)$.
So $(a_1,a_2,a_3,a_4)$ witnesses that $(C_4)$ fails.
This is a contradiction because $n+2\geq 4$ and we are assuming that $(C_{n+2})$ holds.
\end{proof}

\begin{remark*}
The assumption that the order be greater than~$1$ is necessary.
For example, it is shown in~\cite[Proposition 2.2]{nagloo2020algebraic}, and also discussed in~$\S$\ref{riccati} below, that the Riccati equation $\delta x=ax^2 + bx+c$ over $\big(\mathbb C(t),\frac{d}{dt}\big)$, if it has no solutions in $\mathbb C(t)^{\alg}$, satisfies~$(C_3)$.
But it can be shown that $(C_4)$ does not hold.
On the other hand, even for order~1 equations, we do have that $(C_4)\implies(C_m)$ for all $m$.
Indeed, the proof of the Corollary implies that it suffices to rule out the case of a strongly minimal set that is nonmodular, and hence nonorthogonal to the constants.
This would give rise to a definable group action on a strongly minimal set, and it is not hard to show, using the classification of such actions, that then $(C_4)$ fails.
\end{remark*}

Our main motivation for Theorem~\ref{A}, however, is to aid in showing that particular low order equations are strongly minimal.
Let us be a bit more explicit.
Suppose one is trying to show that a given order~$3$ algebraic differential equation over a differential field $k$ is strongly minimal.
Essentially, what is required is to show that there are no Kolchin closed subsets of order~$1$ or~$2$.
Given the original equation, one would expect to rule out such subvarieties by algebraic and/or computational means.
The difficulty is that the potential lower order Kolchin closed subsets may not necessarily be defined over $k$, but rather over some differential field $F\supseteq k$.
What Theorem~\ref{A} does, effectively, is allow one to restrict attention to $F=k\langle a_1,a_2,a_3,a_4\rangle$, where the $a_i$ are independent solutions to the original equation.
With $F$ now fixed, the complexity of the problem is reduced, and it may in certain cases become vulnerable to algebraic and computational approaches.
Indeed, two particular applications that we have in mind, and that are already the subject of ongoing investigation, are the following:
\begin{itemize}
\item
A more elementary proof of strong minimality of the Schwarzian equation for the $j$-function than was given in~\cite{freitag2017strong} or~\cite{cfn}, and that applies to more general Schwarzian-type equations. 
\item
Strong minimality of generic algebraic differential equations of arbitrary order and sufficiently high degree, over possibly nonconstant parameters.
\end{itemize}

\medskip
Our proof of Theorem~\ref{A} goes via a conjecture of Borovik and Cherlin~\cite{BC2008}.
This conjecture is discussed in~$\S$\ref{section-bound}, but, briefly put, it asserts that if $(G,S)$ is a definable faithful and transitive group action (i.e., a {\em definable homogeneous space}) of finite Morley rank such that $G$ has a generic orbit in $S^{n+2}$, where $n=\RM(S)$, then $(G,S)$ isomorphic to the action of $\PGL_{n+1}(F)$ on $\mathbb P^n(F)$ for some algebraically closed field $F$.
In the general setting we obtain the following conditional result:

\begin{customthm}{B}
\label{B}
Suppose $T$ is a totally transcendental theory in which every nonmodular minimal type is nonorthogonal to a minimal type-definable set over the empty set that eliminates imaginaries.
If the Borovik-Cherlin conjecture holds in~$T$ then every complete stationary type of finite rank satisfies
$\nmdeg(p)\leq U(p)+2$.
\end{customthm}

This is part of Theorem~\ref{bound}, below.
In fact, there is a coarser form of the Borovik-Cherlin conjecture (stated as~\ref{BCboundConj} below) which we show is enough to give the slightly weaker bound of $\nmdeg(p)\leq U(p)+3$.

As the Borovik-Cherlin conjecture is known in Morley rank $2$ by~\cite{altinel2018recognizing}, we are able to conclude, unconditionally, that for totally transcendental theories in which every nonmodular minimal type is nonorthogonal to a minimal type-definable set over the empty set admitting elimination of imaginaries, every $U$-rank $2$ type has degree of nonminimality at most $4$.

The connection between degree of nonminimality and the conjecture of Borovik and Cherlin arises as follows:
First, as is pointed out in Section~\ref{section-nmdeg} below, we can reduce to types that admit {\em no fibrations} in the sense of~\cite{moosa2014some}.
By observations in that paper, this leads to a further reduction to types that are internal to a nonmodular minimal type.
It is to the binding group action corresponding to this internality that the Borovik-Cherlin conjecture is applied.

The route from Theorem~\ref{B} to Theorem~\ref{A} is as follows.
Since the proof of Theorem~\ref{B} only involves binding group actions of types that are internal to a nonmodular minimal type, and since all such binding group actions in $\DCF_{0}$ (and $\ccm$) are definably isomorphic to group actions definable in algebraically closed fields of characteristic zero ($\ACF_0$), to derive the bound of $U$-rank plus $2$ in $\DCF_{0}$ and $\ccm$ one only requires the Borovik-Cherlin conjecture to hold in $\ACF_0$.
That is, one requires the Borovik-Cherlin conjecture to hold of algebraic homogeneous spaces in characteristic zero.
A strategy for proving this using the work of Popov~\cite{Popov2007} was proposed in~\cite{BC2008}.
We implement it here, and in fact obtain at once the same result for meromorphic group actions as well.
Namely:

\begin{customthm}{C}
\label{C}
The Borovik-Cherlin conjecture holds of homogeneous spaces definable in $\ACF_0$ and in $\ccm$.
\end{customthm}

This appears in Theorems~\ref{BCacf} and~\ref{BCccm}.

So, combining Theorems~\ref{B} and~\ref{C}, we deduce Theorem~\ref{A} but with the weaker bound of $U(p)+2$.
Additional arguments are then required to bring that down to $U(p)+1$.
As the reader may have guessed from the above outline, the only thing special about $\DCF_0$ and $\ccm$ is that they satisfies the Zilber trichotomy: every nonmodular minimal type is nonorthogonal to a stably embedded pure algebraically closed field of characteristic zero.
So Theorem~A will hold for any such theory.
In particular, it holds in the partial differential case of $\DCF_{0,m}$ where there are $m$ commuting derivations.

Concerning lower bounds on the degree of nonminimality, the situation is highly unsatisfactory.
Examples of degree $>1$ seem very difficult to produce.
In fact, we were unable construct any of degree strictly greater than $2$.

We now describe the organisation of the paper.
In Section~\ref{section-nmdeg} we give some first properties of our degree of nonminimality, and in particular relate it to the notion of admitting no fibrations.
Then, in Section~\ref{examples}, we explore some examples of nonminimality degree in $\DCF_0$, giving an example where that degree is $2$.
We also take the opportunity in~$\S$\ref{riccati} to give, as a brief aside but using similar methods, a new proof of the above mentioned fact that the Riccati equation satisfies~$(C_3)$. 
In Section~\ref{section-bound} we explain how the conjectures of Borovik and Cherlin lead to bounds on the degree of nonminimality.
In Sections~\ref{section-bcalg} we prove the Borovik-Cherlin conjecture in $\ACF_0$.
In Section~\ref{section-dcf} we deduce the desired bound on nonminimality degree in $\DCF_0$.
We conclude, in the final section, by considering $\ccm$ and showing that both the desired bounds and the Borovik-Cherlin conjecture hold there.

\bigskip
\section{Some preliminaries on binding groups}

\noindent
Largely to fix notation and terminology, we recall various facts about internality.
Let $T$ be a complete stable theory eliminating imaginaries, and work in a sufficiently saturated $\mathcal U\models T$.

Given a complete stationary type $p\in S(A)$ and a partial type $r$ over $A$, recall that $p$ is said to be {\em $r$-internal} if there is some parameter set $B\supseteq A$ such that $p(\U)\subseteq\dcl\big(B,r(\U)\big)$.
When this is the case, it is natural to consider the group, denoted by $\aut_A(p/r)$, of permutations of $p(\U)$ which extend to elements of $\aut_A(\U)$ that fix the realisations of $r$ pointwise.
An important theorem in geometric stability theory is that $\aut_A(p/r)$, along with its action on $p(\U)$, is type-definable.
Or, to be more precise, there is a type-definable group $G$ over $A$ acting relatively $A$-definably on $S:=p(\U)$, and an (abstract) isomorphism of groups, $\aut_A(p/r)\to G$, that preserves the action of both on $S$.
We often just identify $\big(\aut_A(p/r),S)$ with it's type-definable manifestation $(G,S)$, and refer to it as the {\em binding group action of $p$ over $r$}.
Of course, when $T$ is totally transcendental (which is the situation we are mainly interested in), the group $G$ is outright definable and not just type-definable.

Recall that $p$ is said to be {\em weakly orthogonal} to $r(\U)$ if whenever $a\models p$ and $\overline b$ is a tuple of realisations of $r$ then $a\ind_A\overline b$.
We will use the fact that if $p$ is not weakly orthogonal to $r(\U)$ then it admits a nonalgebraic function to $r(\U)$. That is,

\begin{lemma}
\label{function}
Suppose $r$ is a partial type over $A$ and the induced structure on $r(\U)$ admits elimination of imaginaries.
If $p=\tp(a/A)$ is not weakly orthogonal to $r(\U)$ then there is $b\in\dcl(Aa)\setminus\acl(A)$ such that $b\models r$.
\end{lemma}

\begin{proof}
It is a well known consequence of stable embeddability that
$$\tp\big(a/\dcl(Aa)\cap\dcl(Ar(\U)\big)$$
isolates $\tp(a/Ar(\U))$. See, for example, Lemma~1 of the Appendix to~\cite{Chatzidakis99modeltheory}.
That $p$ is not weakly orthogonal to $r(\U)$ therefore implies the existence of $\overline b\in \dcl(Ar(\U))$ such that $\overline b\in\dcl(Aa)\setminus\acl(A)$.
That $r$ admits elimination of imaganaries tells us that $\overline b$ is interdefinable over $A$ with a tuple of realisations of $r$.
At least one of the co-ordinates of that tuple will be outside $\acl(A)$, and setting $b$ to be such a co-ordinate works.
\end{proof}

It is also well known, and easy to check using stationarity, that if $p$ is $r$-internal but weakly $r(\U)$-orthogonal, then the binding group action is transitive.
If in addition $T$ is totally transcendental, then it follows that $S$ is also definable, and so $p$ is isolated.
A useful consequence is that the connected component acts transitively:

\begin{lemma}
\label{connectedbind}
Suppose $T$ is totally transcendental, $r$ is a partial type over $A$, and $p\in S(A)$ is stationary, $r$-internal, and weakly $r(\U)$-orthogonal.
Then the connected component of $\aut_A(p/r)$ also acts transitively on $p(\U)$.
\end{lemma}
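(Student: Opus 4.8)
The plan is to leverage the transitivity of the full binding group $G:=\aut_A(p/r)$---which holds here because, as explained in the paragraph preceding the lemma, $p$ is $r$-internal and weakly $r(\U)$-orthogonal---and then pin down the orbits of the connected component via a Morley-degree count. First I would record the structural consequences of the hypotheses that are already established in the text: since $T$ is totally transcendental and $p$ is weakly $r(\U)$-orthogonal, the action of $G$ on $S:=p(\U)$ is transitive, $S$ is definable, and $p$ is isolated. I would then observe that $S$ has Morley degree $1$. Indeed, $S$ is defined by a formula $\varphi$ isolating $p$, so every realisation of $\varphi$ realises the single complete type $p\in S(A)$; since $p$ is stationary it has a unique extension to $\acl(A)$, and that extension has full Morley rank, so $\varphi$ has a unique type of full rank over $\acl(A)$, i.e.\ Morley degree $1$.

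Next, because $T$ is totally transcendental, the connected component $G^0$ is a definable normal subgroup of finite index in $G$. Its orbits on $S$ are therefore definable and finite in number. Since $G^0$ is normal and $G$ acts transitively, $G$ permutes the set of $G^0$-orbits transitively: for $g\in G$ and $a\in S$ one has $g\,G^0 a = G^0\,(ga)$, and transitivity of $G$ on $S$ lets one send any chosen point of one orbit to a chosen point of any other. Consequently, translation by suitable elements of $G$ furnishes definable bijections carrying any one $G^0$-orbit onto any other, so all the orbits share a common Morley rank and Morley degree. As these orbits partition $S$ into finitely many pieces and $\RM(S)$ is the maximum of their ranks, each orbit must in fact have Morley rank exactly $\RM(S)$.

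The argument then closes by additivity of Morley degree over disjoint unions of sets of equal rank: if there were $k$ distinct $G^0$-orbits, then $S$ would be a disjoint union of $k$ definable sets each of rank $\RM(S)$, forcing the Morley degree of $S$ to be at least $k$; since that degree is $1$, there is exactly one orbit and $G^0$ acts transitively. I expect the main obstacle to lie not in the orbit combinatorics but in cleanly extracting the two rank-theoretic inputs---that $S$ has Morley degree $1$ (from stationarity together with isolation of $p$), and that every $G^0$-orbit attains the full rank $\RM(S)$---after which the degree count is immediate.
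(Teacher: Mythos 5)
Your proof is correct, and it finishes by a different mechanism than the paper's. Both arguments share the same setup: since $G^\circ$ has finite index in $G:=\aut_A(p/r)$ and $G$ acts transitively, $S:=p(\U)$ is a finite union of $G^\circ$-orbits, and stationarity (together with isolation of $p$) gives that $S$ has Morley degree $1$. From there the paper argues that exactly one orbit $\mathcal O$ attains $\RM(S)$, that this uniqueness makes $\mathcal O$ invariant under $\aut_A(\U)$ and hence $A$-definable, and that isolation of $p$ (every point of $S$ realises the single type $p$ over $A$) then forces $\mathcal O=S$. You instead use normality of $G^\circ$ in $G$: translations by elements of $G$ carry $G^\circ$-orbits to $G^\circ$-orbits and, by transitivity of $G$, act transitively on the set of orbits, so all orbits are definably bijective and therefore all have rank $\RM(S)$; additivity of Morley degree over a finite partition into full-rank pieces then caps the number of orbits at $\deg(S)=1$. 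Your route avoids the automorphism-invariance and $A$-definability step and the final appeal to isolation (isolation enters only to get definability of $S$ and the degree count), at the cost of invoking normality of $G^\circ$ and the fact that definable bijections preserve rank; the paper's route needs neither of those, only that the generic orbit is canonically singled out. Both proofs are complete and rest on the same two rank-theoretic inputs you identified.
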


\begin{proof}
Let $G:=\aut_A(p/r)$ and $S:=p(\U)$.
As $G^\circ$ is of finite index in $G$, and $G$ acts transitively on $S=p(\U)$, we have that $S$ is a finite union of $G^\circ$-orbits.
By stationarity, $S$ is of Morley degree $1$, and so exactly one of these orbits, say $\mathcal O$, is of Morley rank $\RM(S)$.
But then $\mathcal O$ is $A$-invariant, and hence $A$-definable.
Since $p$ is isolated, this forces $\mathcal O=S$.
\end{proof}

Another bit of notation that will be useful is $p^{(k)}$.
This is the type of the Morley sequence in $p$ of length $k$, that is, the type of an independent sequence of $k$ realisations.
Note that via the diagonal action, $\aut_A(p/r)=\aut_A(p^{(k)}/r)$.

\bigskip
\section{Degree of nonminimality and fibrations}
\label{section-nmdeg}
\noindent
Work in a sufficiently saturated model $\mathcal U$ of a complete stable theory $T$ admitting elimination of imaginaries.

Recall from the introduction that we defined the {\em degree of nonminimality} of a stationary type $p\in S(A)$  of $U$-rank greater than $1$, denoted by $\nmdeg(p)$, to be the least~$k$ such that $p$ has a nonalgebraic forking extension over $Aa_1,\dots,a_k$, for some $a_1,\dots,a_k$ realising $p$.
If $U(p)\leq 1$ then we set $\nmdeg(p)=0$.

\begin{lemma}
\label{basicprop}
\begin{itemize}
\item[(a)]
The nonminimality degree exists.
\item[(b)]
Any witness to $\nmdeg(p)$ is a Morley sequence in $p$.
\item[(c)]
Degree of nonminimality is preserved under interalgebraicity.
\end{itemize}
\end{lemma}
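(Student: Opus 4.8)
The plan is to prove the three parts in order, with part (a) resting on the canonical base and parts (b), (c) on elementary forking calculus. For (a), I would begin from nonminimality of $p$, which supplies a nonalgebraic forking extension $q=\tp(c/B)$ over some $B\supseteq A$, and then push the witnessing parameters down to realisations of $p$ via the canonical base. Set $d=\cb(c/B)$; in a totally transcendental theory $d$ lies in $\acl^{eq}(b_0,\dots,b_{k-1})$ for a finite Morley sequence $(b_0,\dots,b_{k-1})$ of $q$ over $B$, and since $q$ extends $p$ each $b_i\models p$. For forking, since $c\ind_d B$ and $d\in\dcl^{eq}(B)$, the fact that $q$ forks over $A$ is equivalent to $c\nind_A d$, whence $c\nind_A b_0\cdots b_{k-1}$. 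For nonalgebraicity -- the delicate point -- I would take the Morley sequence one longer and use its final element $b_k$ as the forking realisation: then $b_k\ind_B b_0\cdots b_{k-1}$, so $b_k\in\acl(Ab_0\cdots b_{k-1})$ would force $b_k\in\acl(B)$, contradicting nonalgebraicity of $q$. Hence $(b_0,\dots,b_{k-1})$ is a witness and $\nmdeg(p)\le k$.

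For (b), let $(a_1,\dots,a_k)$ be a witness of minimal length $k$, with $c\models p$ giving a nonalgebraic forking extension over $Aa_1\cdots a_k$. If this were not a Morley sequence, I would take the least $i$ with $a_i\nind_A a_1\cdots a_{i-1}$ and argue by cases. If $a_i\in\acl(Aa_1\cdots a_{i-1})$, then deleting $a_i$ changes neither the algebraic closure $\acl(Aa_1\cdots a_k)$ nor the forking of $c$ over it, so $c$ witnesses nonminimality over a tuple of length $k-1$. If instead $a_i\notin\acl(Aa_1\cdots a_{i-1})$, then $a_i$ already realises a nonalgebraic forking extension of $p$ over $Aa_1\cdots a_{i-1}$, so $(a_1,\dots,a_{i-1})$ is a witness. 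Either case contradicts minimality of $k$, so the witness must be a Morley sequence.

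For (c), suppose $p,p'\in S(A)$ are interalgebraic over $A$; by symmetry it is enough to show $\nmdeg(p')\le\nmdeg(p)=:k$. From a witness $(a_1,\dots,a_k)$ for $p$ with forking realisation $c$, I would choose realisations $a_i'\models p'$ interalgebraic with $a_i$ over $A$, and $c'\models p'$ interalgebraic with $c$ over $A$. Then $\acl(Aa_i')=\acl(Aa_i)$, so $\acl(Aa_1'\cdots a_k')=\acl(Aa_1\cdots a_k)$. Nonalgebraicity transfers since $c\in\acl(Ac')$ together with $c\notin\acl(Aa_1\cdots a_k)$ gives $c'\notin\acl(Aa_1\cdots a_k)=\acl(Aa_1'\cdots a_k')$; and forking transfers since $c\in\acl(Ac')$ and $c\nind_A a_1\cdots a_k$ give $c'\nind_A a_1\cdots a_k$, equivalently $c'\nind_A a_1'\cdots a_k'$. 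Thus $(a_1',\dots,a_k')$ witnesses $\nmdeg(p')\le k$, and symmetry finishes it.

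The step I expect to be the main obstacle is the nonalgebraicity clause in (a): the finite Morley sequence carrying the canonical base lives outside $B$, so its algebraic closure cannot be compared with $\acl(B)$ coordinate by coordinate. Realising the forking extension as one further generic member $b_k$ of that sequence is the device that lets nonalgebraicity drop out of the principle that a nonforking extension which is algebraic is already algebraic over its base.
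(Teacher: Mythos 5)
Your proposal is correct and takes essentially the same approach as the paper: part (a) via the canonical base sitting inside the (algebraic) closure of a finite Morley sequence in $q$, with the forking realisation chosen independent from that sequence over $B$ (your $b_k$ is exactly the paper's choice of $b\models q$ with $b\ind_B a_1,\dots,a_k$), and parts (b) and (c) by the same drop-or-truncate case analysis and the same transfer of forking and nonalgebraicity across equal algebraic closures. No gaps.
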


\begin{proof}
Part~(a).
Note that $U(p)>1$ ensures that a (stationary) nonalgebraic forking extension exists, say $q\in S(B)$.
Let $e=\cb(q)$.
Then $e$ is in the definable closure of a finite Morley sequence in $q$, say $(a_1,\dots,a_k)$.
Choose $b\models q$ with $b\ind_{B}a_1,\dots, a_k$.
Then $b\ind_{Ae}Ba_1,\dots, a_k$.
Since $q|_{Ae}$ is a nonalgebraic forking extension of $p$, we have that $\tp(b/Aa_1,\dots,a_k)$ is also.

Part~(b).
Suppose $a_1,\dots,a_k$ witnesses that $\nmdeg(p)=k$.
If $a_i\nind_A(a_1,\dots,a_{i-1})$ then the forking extension $\tp(a_i/Aa_1,\dots,a_{i-1})$ is either algebraic, in which case we could drop $a_i$ from $(a_1,\dots,a_k)$ and contradict minimality of $k$, or $(a_1,\dots,a_{i-1})$ would witness that the degree of nonminimality is $<k$, which is also impossible.

Part(c).
Suppose $p'\in S(A)$ is interalgebraic with $p$ over $A$.
Suppose $a_1,\dots,a_k,b$ are realisations of $p$ such that $\tp(b/Aa_1,\dots, a_k)$ is a nonalgebraic forking extension of $p$.
Then there are $a_1',\dots,a_k',b'$ realising of $p'$ such that $\acl(Ab)=\acl(Ab')$ and $\acl(Aa_i)=\acl(Aa_i')$ for all $i=1,\dots,k$.
Hence $\tp(b'/Aa_1',\dots, a_k')$ is a nonalgebraic forking extension of $p'$.
\end{proof}

Note that, as a consequence of part~(b) for example, one does not expect the degree of nonminimality to be preserved under taking nonforking extensions.
Indeed, if $p$ is nonminimal with $\nmdeg(p)=k$ witnessed by a Morley sequence $a_1,\dots,a_k$, then $\nmdeg(a_k/Aa_1\cdots a_{k-1})=1$.

As the following lemma illustrates, the degree of nonminimality is frequently~$1$.

\begin{lemma}
\label{fibration=1}
Suppose $p=\tp(a/A)$ and there exists $b\in\acl(Aa)\setminus\acl(A)$ such that $a\notin\acl(Ab)$.
Then $\nmdeg(p)=1$.
\end{lemma}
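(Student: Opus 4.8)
The plan is to produce the required single realisation by taking a fresh copy of $a$ that shares the algebraic witness $b$. First I would record the two elementary consequences of the hypotheses. Since $b\in\acl(Aa)\setminus\acl(A)$, we have $a\nind_A b$: if instead $b\ind_A a$, then by symmetry and $b\in\acl(Aa)$ we would get $U(b/A)=U(b/Aa)=0$, i.e. $b\in\acl(A)$, contrary to assumption. Hence $\tp(a/Ab)$ is a forking extension of $p$, so $U(a/Ab)<U(p)$; combined with $a\notin\acl(Ab)$, which gives $U(a/Ab)\geq 1$, this yields $U(p)\geq 2$. In particular $p$ is genuinely nonminimal, so $\nmdeg(p)$ is defined and at least $1$, and it remains to exhibit a witness showing $\nmdeg(p)\leq 1$.

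To do this, I would choose a realisation $a_1$ of $\tp(a/Ab)$ with $a_1\ind_{Ab}a$, which exists by the usual existence of nonforking extensions in a stable theory. Restricting to $A$ gives $a_1\models p$, so $a_1$ is a legitimate realisation of $p$ to adjoin. The crucial observation is that $a_1$ retains the fibration element in its algebraic closure: the relation $b\in\acl(Aa)$ is witnessed by a formula over $A$ together with a finiteness bound, and since $\tp(a_1/Ab)=\tp(a/Ab)$ the very same formula witnesses $b\in\acl(Aa_1)$.

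It then remains to check that $\tp(a/Aa_1)$ is a nonalgebraic forking extension of $p$. By symmetry of nonforking we have $a\ind_{Ab}a_1$, so $U(a/Aba_1)=U(a/Ab)$; and since $b\in\acl(Aa_1)$, adjoining $b$ does not change the rank, whence $U(a/Aa_1)=U(a/Ab)$. As $U(a/Ab)\geq 1$ this shows $a\notin\acl(Aa_1)$, so the extension is nonalgebraic, while $U(a/Ab)<U(p)$ shows that it forks over $A$. Thus $a_1$ alone witnesses a nonalgebraic forking extension of $p$, giving $\nmdeg(p)=1$. I expect the only genuinely delicate point to be the choice of $a_1$: one does not adjoin $b$ itself, which need not realise $p$, but rather an independent copy of $a$ that nonetheless carries $b$ in its algebraic closure; once this $a_1$ is in hand, both the verification that $b\in\acl(Aa_1)$ and the rank computation are routine.
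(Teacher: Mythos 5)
Your construction is exactly the paper's, with the roles of the two copies swapped: the paper takes $a'$ realising a nonforking extension of $\tp(a/Ab)$ to $Aba$ and exhibits $\tp(a'/Aa)$ as the required extension, while you exhibit $\tp(a/Aa_1)$ over the fresh copy $a_1$; by symmetry these are the same witness, and your key observation that $b\in\acl(Aa_1)$ (because $\tp(a_1/Ab)=\tp(a/Ab)$) is precisely the point the paper exploits.

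The gap is in how you verify forking. You claim that since $\tp(a/Ab)$ forks over $A$ we get the strict inequality $U(a/Ab)<U(p)$, and you later conclude that $\tp(a/Aa_1)$ forks over $A$ from $U(a/Aa_1)=U(a/Ab)<U(p)$. But ``forking implies a strict drop in $U$-rank'' is valid only when $U(p)$ is ordinal-valued (in particular finite). Lemma~\ref{fibration=1} is stated, and Section~\ref{section-nmdeg} works, in an arbitrary complete stable theory, where one may have $U(p)=\infty$ and a forking extension again of $U$-rank $\infty$; in that case the strict inequality is unavailable and your forking verification collapses. (Your intermediate conclusion $U(p)\geq 2$ does survive, since $\tp(a/Ab)$ is a forking extension of rank $\geq 1$, but this comes from the definition of $U$-rank, not from the strict drop.) The repair is to avoid rank altogether, which is what the paper does: since $b\in\acl(Aa_1)$, independence $a\ind_A a_1$ would give $a\ind_A \acl(Aa_1)$, hence $a\ind_A b$, contradicting $a\nind_A b$; so $\tp(a/Aa_1)$ forks over $A$. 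Nonalgebraicity can likewise be made rank-free: $a\ind_{Ab}a_1$ and $a\notin\acl(Ab)$ give $a\notin\acl(Aba_1)\supseteq\acl(Aa_1)$. With these substitutions your proof is complete in the lemma's full generality; as written, it proves the lemma only under an unstated hypothesis that $U(p)<\infty$.
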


\begin{proof}
Note that $a\nind_Ab$.
Let $a'$ realise the nonforking extension of $\tp(a/Ab)$ to $Aba$.
Then $a'\nind_Ab$ also, and so $a'\nind_Aa$.
If $a'\in\acl(Aa)$ then, as $a'\ind_{Ab}a$, we would have $a'\in\acl(Ab)$ which would imply that $a\in\acl(Ab)$, which is not the case.
So $\tp(a'/Aa)$ is a nonalgebraic forking extension of $p$.
\end{proof}

In~\cite{moosa2014some} the notion of a {\em proper fibration} of a type $p(x)=\tp(a/A)$ was introduced: it is a nonalgebraic type $\tp(b/A)$ where $b\in \dcl(Aa)$ and $a\notin\acl(Ab)$.
Lemma~\ref{fibration=1} says that if $p$ admits a finite cover which admits a proper fibration then $\nmdeg(p)=1$.
One consequence of this is that if $\nmdeg(p)>1$ then $p$ is algebraic over a $1$-type; thus reducing the search for types of high nonminimality degree to $1$-types.
In particular, in a strongly minimal theory all types are either algebraic, minimal, or have degree of nonminimality $1$.

Another consequence of $\nmdeg>1$, following from some observations in~\cite{moosa2014some}, is internality to a nonmodular minimal type:

\begin{proposition}
\label{internality}
Suppose $p\in S(A)$ is of finite $U$-rank.
If $\nmdeg(p)>1$ then there is a non locally modular minimal type $r$ such that $p$ is almost $r$-internal.
\end{proposition}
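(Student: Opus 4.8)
The plan is to run the contrapositive of Lemma~\ref{fibration=1} against the structure theory of \cite{moosa2014some}, and then to rule out local modularity by a binding-group argument. Since $\nmdeg(p)>1$, Lemma~\ref{fibration=1} tells us that \emph{no} finite cover of $p$ admits a proper fibration: if some finite cover did, that lemma would force $\nmdeg(p)=1$. This is precisely the hypothesis under which the analysis of \cite{moosa2014some} applies, and the first thing I would extract from that paper is that a finite $U$-rank type, no finite cover of which admits a proper fibration, is almost internal to a single minimal type $r$. Morally: a type nonorthogonal to two orthogonal minimal types, or one that is only analysable in $r$ rather than almost internal to it, would split off a proper fibration. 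So we obtain a minimal $r$ with $p$ almost $r$-internal, and it remains only to arrange that $r$ is non-locally modular.

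Here I would argue by contradiction: suppose $r$ is locally modular. Since local modularity is a nonorthogonality invariant of minimal types, and since $p$ being almost $r$-internal makes it almost $r'$-internal for every minimal $r'$ nonorthogonal to $r$, the hypothesis is unambiguous and forces $p$ to be one-based. The natural next step is to pass to the binding group: after moving to a base $B\supseteq A$ over which a nonforking extension of $p$ becomes $r$-internal and weakly $r(\U)$-orthogonal, while retaining $U$-rank $>1$, we obtain a transitive binding group action $(G,S)$ with $S=p(\U)$, and by Lemma~\ref{connectedbind} the connected component $G^\circ$ is already transitive. Because $r$ is locally modular the binding group is itself almost $r$-internal, hence one-based; and $G^\circ$, being connected and one-based, is abelian. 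Thus the point stabiliser $H\le G^\circ$ is normal and $S$ is in definable bijection with the connected abelian one-based group $C:=G^\circ/H$, where $\RM(C)=\RM(S)\ge U(p)>1$.

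The crux is then a lemma on one-based groups: a connected one-based group $C$ of Morley rank $>1$ has a proper infinite definable subgroup. I would prove this by weak normality: if $C$ had no proper infinite definable subgroup, then every definable subset of $C$ would be a Boolean combination of cosets of finite subgroups and of $C$ itself, hence finite or cofinite, making $C$ strongly minimal and contradicting $\RM(C)>1$. Such a subgroup $K$, with $0<\RM(K)<\RM(C)$, yields an intermediate quotient $C\to C/K$ and hence a proper fibration of the binding-group type, contradicting the conclusion of the first paragraph. Therefore $r$ is non-locally modular, as required.

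The step I expect to be the main obstacle is not any of these individual moves but the bookkeeping that links them: arranging weak $r(\U)$-orthogonality over $B$ while ensuring that the fibration produced over $B$ descends to a proper fibration of a finite cover of $p$ \emph{over $A$}. This base-change and descent is exactly the kind of observation I would import from \cite{moosa2014some}, and getting it to interact cleanly with the passage to the binding group is where the real care is needed.
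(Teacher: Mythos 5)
Your overall strategy --- no proper fibrations via Lemma~\ref{fibration=1}, then almost internality from \cite{moosa2014some}, then ruling out local modularity by a binding-group/one-based-group argument --- founders on exactly the step you flag at the end, and that step is not bookkeeping but the mathematical crux. In the locally modular case your argument produces a proper infinite definable subgroup $K$ of the one-based group $C$, hence a proper fibration of $p_B$, where $p_B$ is the nonforking extension of $p$ to the larger base $B$ needed to witness internality and weak $r(\U)$-orthogonality. Lemma~\ref{fibration=1} then gives $\nmdeg(p_B)=1$. But that contradicts nothing: the degree of nonminimality is not preserved under nonforking extension --- the paper observes, right after Lemma~\ref{basicprop}, that if $\nmdeg(p)=k>1$ is witnessed by $a_1,\dots,a_k$, then $\nmdeg(a_k/Aa_1\cdots a_{k-1})=1$. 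Nor does taking $B=Aa_1\cdots a_n$ with $a_i\models p$ (a fundamental system, which one can always arrange) save the argument: a fibration over that base only yields $\nmdeg(p)\leq n+1$. The underlying obstruction is that a forking extension over $Ba'$ need not fork over $Aa'$, so nothing descends for free.

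What closes the gap is a genuinely extra ingredient: in the one-based setting the relevant group-theoretic structure is rigid (connected definable subgroups are defined over $\acl$ of the parameters of the group), which is what allows the fibration to be found over the \emph{original} base. This is precisely what Proposition~2.3 of \cite{moosa2014some} --- the statement the paper actually invokes --- packages: under ``no proper fibrations'' it gives the dichotomy that either $p$ is almost $r$-internal with $r$ non locally modular, or $p$ is interalgebraic \emph{over $A$} with $q^{(k)}$ for some modular minimal $q\in S(A)$. The paper then kills the modular horn at once: $q^{(k)}$ admits the coordinate projection as a proper fibration over $A$, so $\nmdeg(q^{(k)})=1$, while Lemma~\ref{basicprop}(c) (invariance of $\nmdeg$ under interalgebraicity over the same base) gives $\nmdeg(p)=\nmdeg(q^{(k)})$, a contradiction. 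Your group-theoretic steps --- one-basedness of the binding group, commutativity of its connected component, the weak-normality argument producing a proper infinite definable subgroup when the rank exceeds $1$ --- are correct in themselves, but without the descent to $A$ they only re-derive a weaker conclusion than \cite{moosa2014some} already supplies, and they do not prove the proposition.
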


\begin{proof}
By Lemma~\ref{fibration=1}, $p$ does not admit any proper fibration.
Proposition~2.3 of~\cite{moosa2014some} tells us that any finite $U$-rank type that admits no fibrations is semiminimal.
In fact it says more, that either $p$ is interalgebraic with $q^{(k)}$ for some modular minimal type $q\in S(A)$, or $p$ is almost $r$-internal for some non locally modular minimal type~$r$ over possibly additional parameters.
In the former case, $k>1$ since $U(p)>1$, and by Lemma~\ref{basicprop}(c) we have $\nmdeg(p)=\nmdeg(q^{(k)})$.
But as $q^{(k)}$ clearly admits a proper fibration -- namely $q$ itself via any co-ordinate projection -- and hence has degree of nonminimality $1$, this is impossible.
Hence, $p$ is almost $r$-internal.
\end{proof}

\bigskip
\bigskip
\section{Some examples}
\label{examples}
\noindent
In this section we work in $\U\models\DCF_0$, a sufficiently saturated differentially closed field of characteristic zero, with field of constants $\C$.

\subsection{Degree of nonminimality $1$ but without proper fibrations}
The converse of Lemma~\ref{fibration=1} is not true: there are types of $\nmdeg 1$ that do not admit proper fibrations, nor do any of their finite covers.

\label{abelian}
Consider the type $p$ of Example~5.4 of~\cite{moosa2014some}.
Namely, let $A$ be a simple abelian variety of dimension $d>1$ over a subfield $k\subseteq\C$.
Let $\ell:A\to T_0A=\mathbb G_a^d$ be the logarithmic derivative and set $G$ to be the subgroup of $A$ defined by
$$G:=\{g\in A:\ell(g)=(c,0,\dots,0)\text{ for some }c\in\C\}.$$
Then $G$ is an extension of $\mathbb G_a(\C)$ by $A(\C)$.
For $a\in G$ Kolchin generic over $k$, the type $p:=\tp(a/k\ell(a))$ is stationary, $\C$-internal, and of dimension~$d$.
Moreover, it is shown in~\cite[Example~5.4]{moosa2014some} that $p(\U)=a+A(\C)$.
Hence, $\nmdeg(p)=1$.
Indeed, if $X\subseteq A(\C)$ is any proper irreducible subvariety over $k$ of positive dimension, and $x\in X$ generic over $ka$, then $q:=\tp(a+x/ka)$ is  the Kolchin generic type of $a+X$ over $ka$, and hence a nonalgebraic type in $a+A(\C)$ of dimension $<d$.
So $q$ is a nonalgebraic forking extension of $p$.

It is also shown in~\cite{moosa2014some} that $p$ admits no proper fibrations, and the proof given their extends to show that $p$ admits no finite covers that have proper fibrations.
We give some details.
Let $b\in\acl(ka)$.
We wish to show that either $a\in\acl(k\ell(a)b)$ or $b\in\acl(k\ell(a))$.
Set $q:=\tp(a/k\ell(a)b)$.
The binding group of $p$ is $A(\C)$ acting by translation.
By commutativity, the binding group $H$ of $q$ is a definable subgroup of the simple abelian variety $A(\C)$.
So $H$ is either finite or all of $A(\C)$.
If $H$ is finite then $a\in\acl(k\ell(a)b\C)$.
But as the binding group of $p$ acts transitively on $p(\U)$ we must have that $a\ind_{k\ell(a)}\C$ and hence $a\ind_{k\ell(a)b}\C$ as $b\in\acl(ka)$.
It would therefore follow in this case that $a\in\acl(k\ell(a)b)$.
On the other hand, if $H=A(\C)$ then the transitivity of the action of the binding group of $p$ implies that $p\vdash q$, and so $q$ is a nonforking extension, which forces $b\in\acl(k\ell(a))$.\qed

\bigskip
\subsection{Degree of nonminimality $2$}
\label{pgl}
It seems difficult to find types of $\nmdeg>1$.
Any two realisations of such a type must be either interalgebraic or independent.
Moreover, as we have seen, such a type must be internal to a nonmodular minimal type, and by arguments similar to those in Example~\ref{abelian} above, one can deduce various restrictions on the binding group action.

Nevertheless, there are some natural examples.
For instance, a nonminimal internal type with a binding group action that is $2$-transitive would necessarily have $\nmdeg>1$.
Indeed, every pair of distinct realisations is an independent pair, so that over any one realisation there are no nonalgebraic forking extensions.

This happens in $\DCF_0$.
For each $n\geq 2$, we will exhibit a $\mathcal C$-internal type $p$ whose binding group action is isomorphic to the natural action of $\PGL_n$ on $\mathbb P^{n-1}$.
When $n\geq 3$ this action is $2$-transitive.
To construct $p$ we will follow~\cite[Example~4.2]{jin-moosa} which dealt with the case of $n=2$.

Fix $n\geq 2$.
Let $B\in M_n(\mathcal U)$ be an $n\times n$ matrix whose entries form an independent set of differential transcendentals, and consider the system of order~$1$ linear differential equations given by
\begin{equation}
\label{V}
\delta X=BX
\end{equation}
where $X$ is an $n$-column vector of variables.
Let $V$ be the set of solutions to~(\ref{V}), viewed as a $\C$-vector subspace of $\U^n$.
Note that the natural action of $\GL_n(\U)$ on~$\U^n$ induces an action of $\GL_n(\C)$ on $V$.
From the existence of a fundamental system of solutions to~(\ref{V}), namely $n$ vectors that form both a $\C$-basis for $V$ and a $\U$-basis for~$\U^n$, it is not hard to see that this action of $\GL_n(\C)$ is that of $\GL(V)$, the group of $\C$-linear isomorphisms of~$V$.

Let $F=\mathbb Q\langle B\rangle$ be the differential field generated by the entries of $B$, and $q$ the Kolchin generic type of $V$ over $F$.
So $q$ is $\C$-internal.
Our first goal is to show that the binding group action of $q$ is the action of $\GL(V)$ on $V\setminus\{0\}$.

Note that by genericity of $q$, any realisation is an algebraically independent $n$-tuple over $F$.
So if we fix, $v_1,\dots, v_n$ independent realisations of $q$, and form the $n\times n$ matrix $M$ whose columns are the $v_i$, then the $n^2$ entries of $M$ form an algebraically independent set over $F$.
In particular $M$ is invertible.
It follows that we have a map $\rho:\aut_F(q/\C)\to\GL_n(\C)$ given by $\rho(\sigma)=M^\sigma M^{-1}$.
This is because $\delta M=BM$ and $\delta M^\sigma=BM^\sigma$.

\begin{claim}
\label{qnwo}
$q^{(n)}$ is weakly orthogonal to $\C$.
\end{claim}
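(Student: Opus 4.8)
The plan is to convert the statement into one about transcendence degrees and then prove it by a direct differential-algebraic computation driven by the genericity of $B$. Unwinding the definition, $q^{(n)}$ is weakly orthogonal to $\C$ precisely when $(v_1,\dots,v_n)\ind_F\C$. In $\DCF_0$ nonforking over a differential field is measured by transcendence degree, and since $\delta M=BM$ already expresses all derivatives of the $v_i$ through their coordinates, we have $F\langle v_1,\dots,v_n\rangle=F(M)$. Thus the claim is equivalent to
\[
\trdeg\big(\C F(M)/\C F\big)=\trdeg\big(F(M)/F\big)=n^2,
\]
the right-hand equality being exactly the algebraic independence of the $n^2$ entries of $M$ over $F$ noted just above. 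So I would reduce to showing that those entries remain algebraically independent after enlarging the base from $F$ to $\C F$.

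The first key step is to show that $F(M)$ contains no new constants, i.e.\ $\C\cap F(M)=\C\cap F$. Here I would take a putative constant $c=R(M)\in F(M)$, with $R$ rational over $F$, and use $\delta c=0$. Because the entries of $M$ are algebraically independent over $F$, the relation $\delta(R(M))=0$ becomes the rational-function identity
\[
R^\delta+\sum_{i,j}\frac{\partial R}{\partial X_{ij}}\sum_k B_{ik}X_{kj}=0
\]
in $F(X_{ij})$, where $R^\delta$ is the effect of $\delta$ on the coefficients of $R$. Since the entries of $B$ together with all of their derivatives are algebraically independent over $\mathbb Q$ (they are differential transcendentals), I may treat them as independent variables and compare, in this identity, the terms involving the highest-order derivative of $B$ that occurs in $R$: these come only from $R^\delta$ and must vanish, forcing $R$ to be free of that derivative. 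Iterating removes all dependence on $B$, after which the surviving identity $\sum_{i,j}\frac{\partial R}{\partial X_{ij}}\sum_k B_{ik}X_{kj}=0$ forces every $\partial R/\partial X_{ij}$ to vanish (the rows of the generic matrix $X$ span), so $R$ is constant and $c\in\C\cap F$.

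With no new constants in hand I would finish by a standard Picard--Vessiot argument: if $\sum a_i c_i=0$ is a shortest nontrivial dependence with $a_i\in F(M)$ and $c_i\in\C$, then applying $\delta$ (which kills the $c_i$) and invoking minimality shows the ratios $a_i/a_1$ are constants of $F(M)$, hence lie in $\C\cap F(M)=\C\cap F$; this proves $F(M)$ is linearly disjoint from $\C$ over $\C\cap F$. Combined with the analogous disjointness of $F$ from $\C$ (which follows from $B\ind_{\mathbb Q}\C$, the generic type of a differential transcendental being orthogonal to the constants), linear disjointness of $F(M)$ from $\C F$ over $F$ follows, so transcendence degree is preserved and the displayed equality holds. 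Conceptually this says the binding group $\mathcal G\le\GL_n(\C)$ has dimension $\trdeg(F(M)/F)=n^2$ and hence equals $\GL_n(\C)$, which acts transitively on bases of $V$ --- transitivity of the binding group being exactly weak orthogonality.

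I expect the main obstacle to be the no-new-constants computation of the second paragraph: one must keep careful track of how $\delta$ acts on the coefficients of a rational function in $M$ and genuinely exploit the algebraic independence of the successive derivatives of $B$ to peel off the top-order term. This is a small, concrete instance of the fact that the generic system $\delta X=BX$ has full differential Galois group $\GL_n(\C)$; everything else is either the routine forking-to-transcendence-degree translation or standard Picard--Vessiot linear disjointness.
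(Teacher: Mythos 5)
Your proof is correct, but it takes a genuinely different route from the paper's. The paper avoids computation entirely with a counting trick: since $B=M^{-1}\delta M$, one has $F\langle M\rangle=\mathbb Q\langle B\rangle\langle M\rangle=\mathbb Q\langle M\rangle$, whose differential transcendence degree over $\mathbb Q$ is at most $n^2$ (that many differential generators) and at least $n^2$ (it contains $\mathbb Q\langle B\rangle$); so the entries of $M$ are independent differential transcendentals over $\mathbb Q$, hence $F\langle M\rangle$ is a differential rational function field, its constants are exactly $\mathbb Q$, and weak orthogonality follows. You never use this recoverability of $B$ from $M$; instead you establish the same no-new-constants statement by the classical Picard--Vessiot computation driven by the genericity of $B$, and you then make explicit, via Kolchin's linear-disjointness lemma and the tower property of linear/algebraic disjointness, the passage from ``no new constants'' to $M\ind_F\C$ --- a step the paper treats as standard. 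The paper's version buys brevity; yours is self-contained, identifies the claim with the statement that the generic system $\delta X=BX$ introduces no new constants (equivalently has full Galois group), and would still apply in settings where the base field is not generated by $B$ alone, so the counting trick is unavailable. One cosmetic slip in your peeling step: the terms coming only from $R^\delta$ are those involving $\delta^{m+1}B$, one order \emph{above} the top order $m$ occurring in $R$; their coefficients are $\partial R/\partial(\delta^m B)_{ij}$, whose vanishing is what removes $\delta^m B$, whereas terms involving $\delta^m B$ itself occur in both summands. Since the mechanism you describe (linearity in the new top-order derivative) is clearly this one, that is a matter of phrasing, not a gap.
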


\begin{proof}
Since $M\models q^{(n)}$, we need to show that $F\langle M\rangle\cap\C\subseteq F$.
In fact, we will show that $F\langle M\rangle\cap\C=\mathbb Q$.
But
\begin{eqnarray*}
F\langle M\rangle\cap\C
&=&
\mathbb Q\langle B\rangle\langle M\rangle\cap\C\\
&=&
\mathbb Q\langle M\rangle\cap\C\ \ \ \text{ since $B=M^{-1}\delta M$}
\end{eqnarray*}
So it will suffice to show that the entries of $M$ form an independent set of differential transcendentals over $\mathbb Q$.
This is the case: on the one hand the differential transcendental degree of $\mathbb Q\langle M\rangle$ is at most $n^2$ because that is the number of given generators, while on the other hand it is at least~$n^2$ because $\mathbb Q\langle B\rangle\subseteq \mathbb Q\langle M\rangle$ as we have already observed.
\end{proof}

\begin{claim}
$q(\U)=V\setminus\{0\}$
\end{claim}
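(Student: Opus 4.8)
The inclusion $q(\U)\subseteq V\setminus\{0\}$ is immediate: a realisation of $q$ is a Kolchin generic point of the $n$-dimensional space $V$ over $F$, and the single point $0$ is a proper subvariety, so no realisation can equal $0$. The work is all in the reverse inclusion, and the plan is to parametrise $V$ by one fixed generic fundamental matrix and then slide an \emph{external} constant vector across it, using Claim~\ref{qnwo} to guarantee the relevant independence.

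Concretely, I would fix once and for all a matrix $M\models q^{(n)}$, so that $M$ is a fundamental matrix whose $n^2$ entries are algebraically independent over $F$; in particular $M$ is invertible, its columns are $\C$-linearly independent, and hence form a $\C$-basis of $V$. Thus $V\setminus\{0\}=\{Mc:c\in\C^n\setminus\{0\}\}$, and it suffices to show $Mc\models q$ for every nonzero constant vector $c$. The key point is that $c$ is chosen \emph{after} $M$, so it is genuinely a tuple of constants external to $M$; weak orthogonality of $q^{(n)}$ to $\C$ (Claim~\ref{qnwo}) then gives $M\ind_F c$, i.e. the entries of $M$ remain algebraically independent over $F(c)$.

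Granting this, the rest is routine transcendence bookkeeping. Writing $w=Mc$, its $i$-th coordinate is a nontrivial $F(c)$-linear combination of the entries of the $i$-th row of $M$ (nontrivial since $c\neq0$), and distinct coordinates involve disjoint rows; as all entries of $M$ are algebraically independent over $F(c)$, one gets $\trdeg\big(F(c)(w)/F(c)\big)=n$. To descend to $F$, note that $\trdeg(F(c,w)/F)=\trdeg(F(c)/F)+n$, while adding $w$ to the base can only lower the transcendence degree of the fixed constant tuple $c$, so $\trdeg\big(F(w)(c)/F(w)\big)\le\trdeg(F(c)/F)$; combining these gives $\trdeg(F(w)/F)\ge n$. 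Since $\trdeg(F(w)/F)\le n$ (the tuple $w$ has length $n$ and $F\langle w\rangle=F(w)$ because $\delta w=Bw$ with $B\in F$), we conclude $\trdeg(F(w)/F)=n$, so $w=Mc$ is a generic point of the irreducible $V$ and therefore realises $q$.

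The one genuinely delicate point — and the reason Claim~\ref{qnwo} is proved first — is the independence $M\ind_F c$. It is tempting instead to fix the target solution $v'$ and write $v'=Mc$ with $c=M^{-1}v'$; but then $c$ is manufactured from $M$ and $v'$, the computation only returns the tautology $\trdeg(F(v')/F)=\trdeg(F(c)/F)$, and one has no independent handle on the latter. Fixing $M$ before quantifying over $c$ is exactly what converts weak orthogonality into the usable statement $M\ind_F c$, and this is where the whole argument turns.
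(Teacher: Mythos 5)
Your argument is correct, and it turns on the same mechanism as the paper's proof: Claim~\ref{qnwo} converts weak orthogonality into independence of a fixed generic object from an arbitrary tuple of constants, and that independence is what makes constant-linear combinations of generic solutions generic again. But the parametrisation is dual to the paper's. The paper fixes a single realisation $v\models q$ and varies a constant matrix: since the $\GL_n(\C)$-action on $V$ has already been identified with that of $\GL(V)$, which is transitive on $V\setminus\{0\}$, every nonzero point of $V$ is $Cv$ for some $C\in\GL_n(\C)$, and weak orthogonality of $q$ itself (a consequence of Claim~\ref{qnwo}) gives $v\ind_F C$, whence $Cv\models q$. You instead fix the fundamental matrix $M\models q^{(n)}$ and vary the constant coordinate vector $c$. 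This uses the full strength of Claim~\ref{qnwo} (weak orthogonality of $q^{(n)}$, not just of $q$), but in exchange needs only that the columns of $M$ form a $\C$-basis of $V$, not the identification of the group action; and your transcendence bookkeeping makes explicit the step the paper leaves to the reader, namely why independence implies that $Cv$ (respectively $Mc$) is again generic. Both routes rest on the same background fact --- the existence of a fundamental system of solutions --- and are of comparable depth.

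One correction to your closing paragraph: the dichotomy you describe there is not real. Weak orthogonality of $q^{(n)}$ to $\C$ asserts $M\ind_F c$ for \emph{every} tuple of constants $c$, no matter how it is produced --- in particular for $c=M^{-1}v'$ with $v'\in V\setminus\{0\}$ a prescribed target. So the ``tempting'' route you dismiss works verbatim: one still has $M\ind_F c$, the same computation gives $\trdeg(F(v')/F)=n$, and no independent handle on $\trdeg(F(c)/F)$ is ever needed, since it cancels out of your own calculation as well. This does not affect the correctness of your proof, because quantifying over all nonzero constant vectors $c$ already reaches every point of $V\setminus\{0\}$; but the order in which $M$ and $c$ are chosen is not where the argument turns. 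What matters is only that Claim~\ref{qnwo} applies to all constant tuples at once.
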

\begin{proof}
Claim~\ref{qnwo} implies, in particular, that $q$ itself is weakly orthogonal to $\C$.
Hence, the action of $\GL_n(\C)$ on $V$ restricts to an action on $q(\U)$.
Indeed, given $C\in\GL_n(\C)$ and $v\models q$, we have that $v$ and $C$ are independent over $F$, and so $Cv$ is again a realisation of $q$.
Recalling that $\GL_n(\C)$ acts as $\GL(V)$ on $V$, it follows that $q(\U)=V\setminus\{0\}$.
\end{proof}

\begin{claim}
$\rho:\aut_F(q/\C)\to\GL_n(\C)$ is an isomorphism that preserves the natural action of both groups on $q(\U)$.
\end{claim}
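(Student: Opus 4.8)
The plan is to check in turn that $\rho$ is well defined, a homomorphism, injective, and action‑preserving --- all of which are essentially immediate --- and then to spend the real effort on surjectivity. Write $G:=\aut_F(q/\C)$ and recall that $q(\U)=V\setminus\{0\}$. Each $\sigma\in G$ fixes $\C$ pointwise and so restricts to a $\C$‑linear automorphism $\sigma|_V$ of the $n$‑dimensional $\C$‑space $V$; since the columns $v_1,\dots,v_n$ of $M$ form a $\C$‑basis of $V$, the matrix of $\sigma|_V$ in that basis is a constant invertible matrix, which is $\rho(\sigma)$. The assignment $\sigma\mapsto\sigma|_V$ is a homomorphism because it is just restriction of a group action, and that $\rho$ preserves the two actions on $q(\U)$ is exactly the statement that $\rho(\sigma)$ is the matrix of $\sigma|_V$. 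Injectivity is equally direct: if $\rho(\sigma)=\id$ then $\sigma$ fixes the basis $v_1,\dots,v_n$, hence fixes every element of $V$, hence induces the trivial permutation of $q(\U)$, so $\sigma$ is the identity of $G$.

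For surjectivity I would exploit Claim~\ref{qnwo}. Since $q^{(n)}$ is $\C$‑internal and, by that claim, weakly orthogonal to $\C$, the general theory of binding groups recalled earlier yields that $q$ is isolated, that $G$ is outright definable, and that $\aut_F(q^{(n)}/\C)=\aut_F(q/\C)=G$ acts transitively on $q^{(n)}(\U)$. Now the realisations of $q^{(n)}$ are precisely the independent $n$‑tuples drawn from $q(\U)=V\setminus\{0\}$; independence forces the associated matrix to be invertible, so each such tuple is an ordered $\C$‑basis of $V$, and $M$ is one of them. Because $\GL(V)$ acts simply transitively on the set of all ordered bases, the subgroup $\rho(G)\le\GL(V)$ acts \emph{freely} on $q^{(n)}(\U)$; together with transitivity this makes the orbit map $\sigma\mapsto(\sigma(v_1),\dots,\sigma(v_n))$ a definable bijection $G\to q^{(n)}(\U)$.

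The argument then closes with a rank count. A fundamental system of solutions provides a parameter‑definable bijection of $V\setminus\{0\}$ with $\C^n\setminus\{0\}$, so $U(q)=n$ and hence $U(q^{(n)})=n^2$; by the bijection just produced, $U(\rho(G))=U(G)=n^2$. But $\rho(G)$ is a definable subgroup of $\GL_n(\C)$, which is connected of $U$‑rank $n^2$, and a definable subgroup of a connected group having the full $U$‑rank of that group must be the whole group. Therefore $\rho(G)=\GL_n(\C)$, and $\rho$ is the desired action‑preserving isomorphism.

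I expect the sole genuine obstacle to be surjectivity, and in particular the need to pass from $q$ to its $n$‑th power: it is transitivity of $G$ on the generic ordered bases --- equivalently, weak orthogonality of $q^{(n)}$ (rather than merely $q$) to $\C$, which is exactly why Claim~\ref{qnwo} is formulated for $q^{(n)}$ --- that pins the image down to all of $\GL_n(\C)$. Everything else is bookkeeping that follows the structure already set up in the two preceding claims.
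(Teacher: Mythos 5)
Your proposal is correct and takes essentially the same route as the paper: injectivity and action-preservation come from the fact that $v_1,\dots,v_n$ is a $\C$-basis of $V$, and surjectivity comes from the weak orthogonality of $q^{(n)}$ to $\C$ -- transitivity of the binding group on $q^{(n)}(\U)$ plus freeness of the action forces $\dim\aut_F(q/\C)=n^2=\dim\GL_n(\C)$, and connectedness of $\GL_n(\C)$ finishes. The paper compresses this surjectivity step into a single sentence about dimensions; your version merely spells out the same transitive-plus-free orbit count in detail.
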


\begin{proof}
It is easily checked that $\rho$ is a homomorphism.
For injectivity note that if $\rho(\sigma)=1$ then $M^\sigma=M$ and hence $\sigma$ fixes $v_1,\dots, v_n$.
But from Claim~\ref{qnwo} it follows that $v_1,\dots,v_n$ is a $\C$-basis for $V$, so that we must have $\sigma=\id$.
Surjectivity now also follows from Claim~\ref{qnwo} as it implies that the dimension of $\aut_F(q/\C)$ (as an algebraic group) must be $\dim(q^{(n)})=n^2=\dim\GL_n(\C)$.
Finally, fixing $\sigma\in\aut_F(q/\C)$, note that, for each $i=1,\dots,n$, $\sigma(v_i)$ is the $i$th column of $M^\sigma=\rho(\sigma)M$, and hence $\sigma(v_i)=\rho(\sigma)v_i$.
Again using the fact that  $v_1,\dots,v_n$ is a $\C$-basis for $V$, we get that $\sigma$ and $\rho(\sigma)$ agree on all of $q(\U)$.
\end{proof}

We have shown that the binding group action of $q$ is that of $\GL(V)$ on $V\setminus \{0\}$.
Our next step is to projectivise.
That is, consider $\pi:\U^n\setminus\{u_n=0\}\to \U^{n-1}$ given by $\pi(u_1,\dots,u_n)=(\frac{u_1}{u_n},\dots,\frac{u_{n-1}}{u_n})$.

\begin{claim}
\label{proj}
The restricton of $\pi$ to $V\setminus \{0\}$ is the natural projectivisation map $V\setminus\{0\}\to\mathbb P(V)$.
\end{claim}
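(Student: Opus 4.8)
The plan is to show that $\pi$ restricted to $V\setminus\{0\}$ has precisely the punctured $\C$-lines of $V$ as its fibres, since this is exactly what it means for it to realise the projectivisation map $V\setminus\{0\}\to\mathbb P(V)$. First I would unwind what $\pi$ identifies: for $v=(u_1,\dots,u_n)$ and $w=(u_1',\dots,u_n')$ with $u_n,u_n'\neq 0$, the equality $\pi(v)=\pi(w)$ says $u_i/u_n=u_i'/u_n'$ for each $i<n$, which is equivalent to $w=cv$ with $c:=u_n'/u_n\in\U^\times$. Thus the fibres of $\pi$ are exactly the punctured $\U$-lines through points with nonzero last coordinate, and the entire content of the claim is that, among vectors of $V$, $\U$-proportionality coincides with $\C$-proportionality.

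The key step is to verify that if $v,w\in V\setminus\{0\}$ satisfy $w=cv$ for some $c\in\U$, then in fact $c\in\C$. Here I would use that both $v$ and $w$ solve~(\ref{V}). Differentiating $w=cv$ gives
\begin{equation*}
Bw=\delta w=(\delta c)\,v+c\,\delta v=(\delta c)\,v+cBv=(\delta c)\,v+Bw,
\end{equation*}
so $(\delta c)\,v=0$, and since $v\neq 0$ this forces $\delta c=0$, i.e.\ $c\in\C$. Conversely, any $c\in\C^\times$ with $w=cv$ evidently yields $\pi(v)=\pi(w)$, and $\C^\times v\subseteq V$ because $V$ is a $\C$-subspace. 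Hence the fibres of $\pi|_{V\setminus\{0\}}$ are precisely the punctured $\C$-lines $\C^\times v$ spanned by nonzero $v\in V$.

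Finally I would record that a map whose fibres are exactly the punctured $\C$-lines of $V$ is, by definition, a realisation of the projectivisation $V\setminus\{0\}\to\mathbb P(V)$; identifying $\mathbb P(V)$ with its image under $\pi$ completes the identification. I do not expect a genuine obstacle here: the only subtlety is that $\pi$ is defined only where the last coordinate is nonzero, so it realises projectivisation on the corresponding affine open of $\mathbb P(V)$ rather than globally, while the sole mathematical input is the one-line differentiation argument forcing the scalar $c$ to be constant.
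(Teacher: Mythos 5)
Your central computation is exactly the paper's: differentiate the relation $v'=av$ and compare $\delta(av)=(\delta a)v+aBv$ with $\delta(av)=B(av)=aBv$ to conclude $\delta a=0$, so that $\U$-proportionality and $\C$-proportionality coincide on $V$. That part is correct. However, there is a genuine gap at the point you dismiss as a mere ``subtlety'': you never establish that $\pi$ is defined at \emph{every} point of $V\setminus\{0\}$, i.e.\ that no nonzero element of $V$ has vanishing last coordinate, and instead you retreat to the weaker assertion that $\pi$ realises projectivisation only ``on the corresponding affine open of $\mathbb P(V)$ rather than globally.'' That weaker assertion is not the claim. The claim is that $\pi\restriction_{V\setminus\{0\}}$ \emph{is} the projectivisation map $V\setminus\{0\}\to\mathbb P(V)$, with full domain and full image, and this is what the subsequent arguments use: the identification $p(\U)=\mathbb P(V)$ and the computation $\ker(\widehat\pi)=\C^*$ both require that the fibres of $\pi$ on all of $V\setminus\{0\}$ be exactly the punctured $\C$-lines, not just those lines meeting the chart $\{u_n\neq 0\}$.

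The missing ingredient is the preceding claim of the paper, namely $q(\U)=V\setminus\{0\}$, together with Kolchin genericity of $q$: every realisation of $q$ is an algebraically independent $n$-tuple over $F$, so every nonzero $v\in V$ has all coordinates transcendental over $F$, in particular $u_n\neq 0$. (Note this is not automatic from linear algebra alone: the vanishing locus of the last coordinate is a $\C$-subspace of $V$, and ruling it out requires an argument; here it is the genericity statement that does the work.) Once this is in place, $\pi$ is totally defined on $V\setminus\{0\}$, your fibre computation applies everywhere, and surjectivity onto $\mathbb P(V)$ follows since every $\C$-line of $V$ is the line through some realisation of $q$. With that one sentence added, your proof matches the paper's.
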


\begin{proof}
First, note that $\pi$ is defined on $V\setminus\{0\}$ because the latter is $q(\U)$ and realisations of $q$ are algebraically independent tuples over $F$.
It is clear that if $c\in\C^*$ and $v\in V$ is nonzero then $\pi(v)=\pi(cv)$.
Conversely, suppose $\pi(v)=\pi(v')$ for some pair of nonzero $v,v'\in V$.
Then $v'=av$ for some $a\in \U^*$.
On the one hand we have $\delta(av)=(\delta a)v+aBv$.
On the other hand
$\delta(av)=B(av)=aBv$.
It follows that $\delta a=0$, and $a\in\C^*$ as desired.
\end{proof}

Let $p:=\pi(q)\in S_{n-1}(F)$.
This is a $U$-rank $n-1$ type that is $\C$-internal.
By Claim~\ref{proj}, $p(\U)=\mathbb P(V)$.
Moreover, we have an induced surjective homomorphism $\widehat\pi:\aut_F(q/\C)\to\aut_F(p/\C)$ that is compatible with the actions of these groups on $q(\U)$ and $p(\U)$ respectively, see for example~\cite[Lemma~3.1]{jin-moosa}.
Under the identification of $\aut_F(q/\C)$ with $\GL(V)$, we see by Claim~\ref{proj}, that the kernel of $\widehat\pi$ is $\C^*$.
Indeed, the elements of $\ker(\widehat\pi)$ are precisely those elements of $\GL(V)$ that preserve every $1$-dimensional subspace of $V$, and that is clearly $\C^*$.
Hence $\aut_F(p/\C)$ acts on $p(\U)=\mathbb P(V)$ as $\GL(V)/\C^*=\PGL(V)$ does.
Since $V$ is an $n$-dimensional vector space (over $\C$) this is what we were looking for: a binding group action that is isomorphic to the action of $\PGL_n$ on $\mathbb P^{n-1}$.

In particular, if $n\geq 3$ then $p$ is nonminimal, $\C$-internal and with $\aut_F(p/\C)$ acting $2$-transitively on~$p(\U)$.
Hence $\nmdeg(p)\geq 2$.
In fact,
\begin{claim}
If $n\geq 3$ then $\nmdeg(p)=2$.
\end{claim}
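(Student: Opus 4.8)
We have already seen, from the $2$-transitivity of the action when $n\geq 3$, that $\nmdeg(p)\geq 2$; so the content of the claim is the reverse inequality $\nmdeg(p)\leq 2$. To establish it I will exhibit independent realisations $a_1,a_2$ of $p$ together with a nonalgebraic forking extension of $p$ over $Fa_1a_2$. The idea is to exploit the projective geometry carried by $p(\U)=\mathbb P(V)$: two independent realisations determine a line, and a generic point of that line will again realise $p$ but with strictly smaller rank.

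So fix $a_1,a_2\models p$ independent over $F$. As they are distinct realisations of the nonalgebraic type $p$, they are distinct points of $\mathbb P(V)$, and hence span a projective line $\ell=\mathbb P_{\C}(W)$, where $W\subseteq V$ is the two-dimensional $\C$-subspace spanned by vectors of $V$ lying over $a_1$ and $a_2$. Two things need checking about $\ell$. First, that $\ell\subseteq p(\U)$: this is immediate, since $\ell=\mathbb P_{\C}(W)\subseteq\mathbb P_{\C}(V)=\mathbb P(V)=p(\U)$. Second, that $\ell$ is definable over $Fa_1a_2$: for this I would identify $\ell$ with the intersection of $p(\U)=\mathbb P(V)$ and the line through $a_1,a_2$ in the ambient space $\mathbb P^{n-1}(\U)$. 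The ambient line is plainly $Fa_1a_2$-definable, and $\mathbb P(V)$ is $F$-definable since $p$ is isolated, so $\ell$ is $Fa_1a_2$-definable.

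The geometric heart of the argument is that $\ell$, being the projectivisation of a two-dimensional $\C$-vector space, is definably isomorphic to $\mathbb P^1(\C)$ and is therefore a strongly minimal, $U$-rank $1$ subset of $p(\U)$. Let $a_3$ be a generic point of $\ell$ over $Fa_1a_2$. Then $a_3\models p$, because $\ell\subseteq p(\U)$, while $U(a_3/Fa_1a_2)=1$. Since $n\geq 3$ we have $U(p)=n-1\geq 2$, so $U(a_3/Fa_1a_2)=1<U(a_3/F)$; thus $a_3\nind_F a_1a_2$ and $\tp(a_3/Fa_1a_2)$ is a nonalgebraic (indeed $U$-rank $1$) forking extension of $p$. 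This witnesses $\nmdeg(p)\leq 2$, and together with the lower bound we conclude $\nmdeg(p)=2$.

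The one point that requires genuine care -- and which I expect to be the main, if routine, obstacle -- is the identification $\ell=\mathbb P(V)\cap(\text{ambient line through }a_1,a_2)$. Concretely this amounts to showing that if a $\U$-linear combination of two vectors of $V$ again lies in $V$, then the coefficients are already constants; this in turn follows from the fact that a $\C$-basis of $V$ is $\U$-linearly independent (indeed it is a $\U$-basis of $\U^n$). Everything else is a direct rank computation, and the hypothesis $n\geq 3$ enters exactly in guaranteeing that the rank of the line, namely $1$, is strictly below $U(p)=n-1$.
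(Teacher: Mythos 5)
Your proof is correct and follows essentially the same route as the paper's: both exhibit the forking extension as the generic type of the projective line in $\mathbb P(V)$ through two independent realisations, with the hypothesis $n\geq 3$ ensuring this line has rank strictly between $0$ and $U(p)=n-1$. The only cosmetic difference is that the paper runs the rank argument upstairs in $V$, taking a Kolchin generic point of the $2$-dimensional subspace $W$ and pushing it down via $\pi$, whereas you work directly with the projectivised line $\ell\cong\mathbb P^1(\C)$ downstairs; the definability and coefficient-in-$\C$ points you flag are exactly the ones the paper handles via its Claim on the projectivisation map.
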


\begin{proof}
We have already seen that $\nmdeg(p)\geq 2$.
To show that $\nmdeg\leq 2$ we fix a pair of independent realisations $w_1,w_2$, of $p$, and show that $p$ admits a nonalgebraic forking extension to $K:=F\langle w_1,w_2\rangle$.
This should be the generic type of the line in $\mathbb P(V)$ connecting $w_1$ and $w_2$.
That is, let $W\subseteq V$ be the $K$-definable $2$-dimensional $\C$-subspace that contains $\pi^{-1}(w_1)$ and $\pi^{-1}(w_2)$.
Since $n\geq 3$, $W$ is a proper subspace of $V$.
Let $w$ be Kolchin generic in $W$ over~$K$.
We claim that $p'=tp(\pi(w)/K)$ is a nonalgebraic forking extension of $p$.
Note that $w$ is Kolchin generic in $\pi^{-1}(\pi(w))$ over $K\langle\pi(w)\rangle$.
So, if $p'$ were a nonforking extension of $p$ then $w$ would be Kolchin generic in $V$ over $K$, contradicting the fact that it is contained in the proper subspace $W$.
Similarly, if $p'$ were an algebraic extension of $p$ then $U(w/K)$ would be $1$, contradicting the fact that $\dim_{\C}W=2$.
\end{proof}

\bigskip
\subsection{An aside on the order~$1$ Riccati equation}
\label{riccati}
In the Introduction we mentioned that if an order~$1$ Riccati equation $\delta y=ay^2 + by+c$ over $\big(\mathbb C(t),\frac{d}{dt}\big)$ has no algebraic solutions then it satisfies $(C_3)$; namely, any three distinct solutions are algebraically independent over $\mathbb C(t)$.
This is a theorem of Nagloo used in the study of certain Painlev\'e equations, see~\cite[Proposition 2.2]{nagloo2020algebraic}.
But it can also be seen using the approach of the previous section, as we now explain.

First, by a standard change of variables as in~\cite[Fact~2.2]{nagloo2020algebraic}, it suffices to consider
\begin{equation}
\label{riccati-reduced}
\delta y=-y^2+c.
\end{equation}
Let $U$ denote the set of solutions.
As we are assuming no algebraic solutions, (\ref{riccati-reduced}) isolates a complete type $p\in S(F)$ where $F:=\mathbb C(t)$.
This type is $\C$-internal, and our approach is to show that the associated binding group action is that of $\PGL_2$ on $\mathbb P^1$.
This will suffice: that action is $3$-transitive and hence any three distinct realisations will be independent.

Following~\cite{nagloo2020algebraic}, we consider the associated second order linear homogeneous differential equation $\delta^2x=cx$.
The connection to~(\ref{riccati-reduced}) is that if $\delta^2x=cx$ and $y:=\frac{\delta x}{x}$ then $\delta y=-y^2+c$.
Putting $\delta^2x=cx$ in $2\times 2$ matrix form, we get
\begin{equation}
\label{order2}
\delta X=
\begin{pmatrix}
0&1\\
c&0
\end{pmatrix}
X.
\end{equation}
Let $V$ be the space of solutions to~(\ref{order2}).
It is explained in~\cite{nagloo2020algebraic} how the results of~\cite{MR839134} and the assumption that~(\ref{riccati-reduced}) has no algebraic solutions imply that the {\em differential Galois group} of~(\ref{order2}) is $\operatorname{SL}_2(\C)$.
One of the consequences of this is that, as $\operatorname{SL}_2$ acts transitively on $\mathbb A^2\setminus\{0\}$, the set $V\setminus\{0\}$ is a complete type $q$ over $F$.
In this situation (i.e., a linear homogeneous differential equation whose nonzero solutions isolate a type), the differential Galois group can be identified with the binding group $\aut_F(q/\C)$.
The upshot is that the embedding $\rho:\aut_F(q/\C)\to \GL_2(\C)$ from~$\S$\ref{pgl} now identifies $\aut_F(q/\C)$ with $\operatorname{SL}_2(\C)$.
Proceeding as in~$\S$\ref{pgl}, we observe that $(u_1,u_2)\mapsto\frac{u_2}{u_1}$ restricts to the projectivisation $\pi:V\setminus\{0\}\to\mathbb P(V)=U$.
Indeed, this is essentially what~\cite[Proposition~2.1]{nagloo2020algebraic} says.
We get an induced surjective homomorphism $\widehat\pi:\aut_F(q/\C)\to\aut_F(p/\C)$ of group actions, whose kernel is $\operatorname{SL}_2(\C)\cap\C^*=\{1,-1\}$, and we conclude that the action of $\aut_F(p/\C)$ on $U$ is that of $\operatorname{SL}_2/\{1,-1\}=\PGL_2$ on $\mathbb P(V)=\mathbb P^1$, as desired.

\bigskip
\bigskip
\section{Toward an upper bound: generic transitivity}
\label{section-bound}
\noindent
Our goal is to find upper bounds on the degree of nonminimality in terms of natural invariants.
This has the potential to be useful as it opens up an approach to proving minimality in certain cases; such a bound would limit the parameter spaces one needs to consider when ruling out the existence of nonalgebraic forking extensions.

In searching for an upper bound we may as well assume that the degree of nonminimality is greater than~$1$.
Hence, by Proposition~\ref{internality}, we can restrict our attention to types that are internal to a (nonmodular) minimal type.
The following proposition bounds the degree of nonminimality in terms of the length of a ``fundamental system of solutions" witnessing the internality.

We return to the general setting of a fixed sufficiently saturated model $\U$ of a complete stable theory admitting elimination of imaginaries.

\begin{proposition}
\label{wo}
Suppose $p\in S(A)$ is a stationary type of finite $U$-rank, $r$ is a partial type over $A$ such that $p$ is $r$-internal, and $U(p)>U(r)$.
Assume, moreover, that the induced structure on $r(\U)$ admits elimination of imaginaries.
Then
$$\nmdeg(p)\leq \min\{k:p^{(k)}\text{ is not weakly orthogonal to }r(\U)\}.$$
\end{proposition}

\begin{proof}
Since $p$ is nonalgebraic and $r$-internal there is a $k>0$ such that $p^{(k)}$ is not weakly orthogonal to $r(\U)$.
Let $k$ be minimal such, and fix $(a_1,\dots,a_k)\models p^{(k)}$.
We will show that $p$ has a nonalgebraic forking extension over $Aa_1,\dots, a_k$.

By non weak orthogonality and elimination of imaginaries, see Lemma~\ref{function}, there is $b\models r$ with $b\in\dcl(Aa_1,\dots, a_k)\setminus\acl(A)$.
We claim that $\tp(b/Aa_1,\dots, a_{k-1})$ is a proper fibration of $\tp(a_k/Aa_1,\dots, a_{k-1})$.
We already know it is a fibration, so it remains to verify that neither $\tp(b/Aa_1,\dots, a_{k-1})$ nor $\tp(a_k/Aa_1,\dots, a_{k-1}b)$ are algebraic.
The former is by minimality of $k$ as $b\in\acl(Aa_1,\dots, a_{k-1})$ would imply that $p^{(k-1)}$ is not weakly orthogonal to $r(\U)$.
On the other hand, as $U(r)<U(p)$,
$$U(a_1,\dots,a_{k-1},b/A)\leq (k-1)U(p)+U(r)<kU(p)=U(a_1,\dots,a_k/A),$$
and so $a_k\notin\acl(Aa_1,\dots, a_{k-1}b)$, as desired.

Hence $\nmdeg(a_k/Aa_1,\dots, a_{k-1})=1$ by Lemma~\ref{fibration=1}.
That is, there exists $a'\models\tp(a_k/Aa_1,\dots, a_{k-1})$ such that $a'\nind_{Aa_1,\dots, a_{k-1}}a_k$ and $a'\notin\acl(Aa_1,\dots, a_k)$.
But then $a'\nind_{A}a_1,\dots, a_k$ also, and hence $\tp(a'/ Aa_1,\dots, a_k)$ is a nonalgebraic forking extension of $p$.
So $\nmdeg(p)\leq k$.
\end{proof}

Now, internality together with weak orthogonality yields a transitive action of the binding group on the realisations of the type.
In the totally transcendental case this is a definable homogeneous space, and understanding it should (and will) be useful.
One obstacle, however, is that the minimal type $r$ produced in Proposition~\ref{internality} need not be over the same parameters as $p$, whereas that is necessary to apply Proposition~\ref{wo}. In certain theories of interest, however, such as differentially closed fields and compact complex manifolds, every nonmodular minimal type $r$ over whatever parameters is nonorthogonal to one over the empty set.
We therefore impose the following additional assumptions on our theory:

\begin{assumption}
\label{nlm}
Suppose $T$ is totally transcendental and every nonmodular minimal type is nonorthogonal to a minimal type-definable set over the empty set that eliminates imaginaries.
\end{assumption}

Suppose that $T$ is a totally transcendental theory satisfying a strong form of the  Zilber trichotomy in the sense that there is a definable pure algebraically closed field $F$ such that every nonmodular minimal type is nonorthogonal to $F$.
Then we can name the parameters of $F$ to the language and Assumption~\ref{nlm} is now satisfied.
In particular this assumption holds in $\DCF_{0,m}$ and $\ccm$.

Under this assumption, Propositions~\ref{internality} and~\ref{wo}, together, suggest that we focus on the following context: a finite rank type $p\in S(A)$ of nonminimality degree greater than~$1$ and a minimal nonmodular partial type $r$ over the empty set such that $\aut_A(p/r)$ acts transitively on $p^{(k-1)}$ for some $k\geq\nmdeg(p)$.
That is, the action of $\aut_A(p/r)$  is ``generically $(k-1)$-transitive" on $p(\U)$.
Generic transitivity is a generalisation of a notion introduced for algebraic groups by Popov~\cite{Popov2007} and later developed in the context of groups of finite Morley rank by Borovik and Cherlin~\cite{BC2008} as follows:

\begin{definition}[Generic transitivity~\cite{BC2008}]
\label{genBC}
A definable action of a group $G$ of finite Morley rank on a definable set $S$ of finite Morley rank is \emph{generically k-transitive} if the diaganol action of $G$ on $S^k$ admits an orbit $\mc O$ such that $\RM(S^k \setminus \mc O)<\RM(S^k)$.
\end{definition} 

This weakens the classical notion of $k$-transitivity: an action of $G$ on $S$ is \emph{$k$-transitive} if for any two $k$-tuples $(x_1, \ldots , x_k )$ and $(y_1, \ldots , y_k)$ with $x_i \neq x_j, y_i \neq y_j$ for $i \neq j$, there is some $g \in G$ such that $(g x_1, \ldots , g x_k ) = (y_1, \ldots , y_k).$
A high degree of transitivity is known to impose strong structural conditions on the group.
For instance, Jordan \cite{jordan1872recherches} shows that if a finite group $G$ acts $4$-transitively on a set $S$ with the pointwise stabilizer of any four distinct elements being trivial (that is, $G$ acts {\em sharply} $4$-transitively) then $G$ must be one of $S_4,$ $S_5,$ $A_6$ or the Mathieu group~$M_{11}$.
Later, Tits~\cite{tits1951groupes} generalised Jordan's theorem to infinite groups, and Hall~\cite{hall1954theorem} loosened the sharpness requirement.
These theorems imply, for instance, that there are no infinite groups which act sharply $n$-transitively for $n \geq 4$. 
Dropping the sharpness requirement, but using the classification of finite simple groups, one can show that the only finite groups with a $4$-transitive action are symmetric, alternating, and Mathieu groups \cite[page 110]{MR1721031}.
In the case of infinite groups, there are few known restrictions on multiply transitive groups in general, but higher transitivity is very restricted in various definable contexts. For instance, there are no infinite $4$-transitive group actions definable in algebraically closed fields \cite{knop1983mehrfach} or in o-minimal structures \cite{tent2000sharply}.

In the definable category, even the \emph{weaker} notion of generic transitivity imposes strong structural consequences. 
The prototypical examples of high generic transitivity are, in a (pure) algebraically closed field $F$, the following:
\begin{itemize}
\item
The natural action of $\GL_n(F)$ on $F^n$ is generically $n$-transitive where the generic orbit consists of the set of bases for the vector space.
\item
The induced action of $\PGL_{n+1}(F)$ on $\mathbb P^n(F)$ is generically $(n+2)$-transitive with the generic orbit being the set of projective bases.
(Recall that a {\em projective basis} of $\mathbb P^n$ is a set of $n+2$ points with no $n+1$ of them lying on the same hyperplane.)
\end{itemize}
Conjecturally, one cannot get any higher generic transitivity than the latter action:

\begin{conjecture}[Borovik-Cherlin~\cite{BC2008}]
\label{BCboundConj}
If $G$ is a finite Morley rank group acting definably and generically $k$-transitively on an infinite set $S$, then $k\leq\RM(S)+2$.
\end{conjecture}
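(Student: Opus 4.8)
The plan is to attack the bound through the stabilizer filtration of the action, converting generic transitivity into a rank count and then reducing the general case to a structural identification of the group. Fix a sufficiently generic tuple $(a_1,\dots,a_k)\in S^k$ and set $G_0=G$ and $G_i=\stab_G(a_1,\dots,a_i)$. Generic $k$-transitivity is equivalent to the statement that at each stage the orbit of $a_i$ under $G_{i-1}$ is generic in $S$, i.e.\ $\RM(G_{i-1})-\RM(G_i)=\RM(S)$ for every $i=1,\dots,k$; summing the telescoping differences gives $\RM(G)=k\,\RM(S)+\RM(G_k)$. In these terms the conjecture asserts that the descending chain $G_0\supset G_1\supset\cdots$ can sustain a full drop of $\RM(S)$ for at most $\RM(S)+2$ steps, or equivalently that once $\RM(S)+2$ generic points have been fixed the connected stabilizer can no longer move a further generic point in a full-rank orbit. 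The extremal case to keep in view is $\PGL_{n+1}(F)$ acting on $\mathbb P^n(F)$: fixing $n+1$ points in general position leaves the diagonal torus, of rank $n=\RM(S)$, acting with a dense orbit on the last coordinate, while fixing one more point trivialises the stabilizer and halts the process.

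The first reductions are formal. Replacing $G$ by $G^\circ$ loses nothing, since the $G$-orbit on $S^k$ is a finite union of $G^\circ$-orbits exactly one of which has full rank (as in the proof of Lemma~\ref{connectedbind}), and quotienting by the kernel of the action makes $G$ faithful without disturbing any generic transitivity. Next, generic $2$-transitivity forces generic primitivity of the action, so there is no proper $G$-invariant fibration to contend with and the analysis may be concentrated on the socle of $G$. The inductive engine is the passage to the point stabilizer: if $(G,S)$ is generically $k$-transitive then $(G_a,S)$ is generically $(k-1)$-transitive for generic $a\in S$, so a bound for all finite Morley rank actions of smaller complexity feeds back into a bound for $(G,S)$, provided one controls how $\RM(S)$ and the socle evolve along the chain.

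The decisive step is to identify the socle against the module-theoretic template of the $\GL$ and $\PGL$ examples. Concretely, one would aim to show that generic $(\RM(S)+2)$-transitivity forces $G$ to interpret an algebraically closed field $F$ and forces $S$ to be, generically and $F$-definably, a projective space $\mathbb P^m(F)$ on which $G$ acts through $\PGL_{m+1}(F)$; the bound $k\le m+2=\RM(S)+2$ then follows from Popov's theorem on algebraic homogeneous spaces~\cite{Popov2007}, exactly as in the $\ACF_0$ case treated later in the paper.

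The hard part, and the reason the statement remains a conjecture, is precisely this recovery of algebraic structure from bare generic transitivity in an \emph{arbitrary} finite Morley rank group. One cannot simply invoke Zilber's field theorem, because the Zilber trichotomy can fail in general totally transcendental theories, so the interpreted field is not automatic and must be extracted from the combinatorics of the stabilizer chain and the generic primitivity of the action. Even granting a field, identifying the simple socle as an algebraic group over it is, in full generality, essentially the Cherlin--Zilber algebraicity conjecture for simple groups of finite Morley rank, which is open. Thus the realistic target is a \emph{conditional} theorem: establish the reduction to a simple-socle action and show that the bound is a consequence of algebraicity of that socle, leaving the unconditional statement dependent on the structural theory of simple finite Morley rank groups. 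It is exactly this dependence that is removed in the algebraic and meromorphic settings, where the socle is algebraic for geometric reasons and Popov's theorem applies directly.
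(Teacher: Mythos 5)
The statement you were asked to prove is, in the paper itself, only a conjecture: the paper contains no proof of it, and indeed no unconditional proof is known. What the paper does provide is (i) a derivation of this coarse bound from the finer identification Conjecture~\ref{BCConj}, using the reductions to a connected, faithful, transitive action and the fact that $(\PGL_{n+1}(F),\PP^n(F))$ is \emph{sharply} generically $(n+2)$-transitive, and (ii) proofs of both conjectures in the special settings $\ACF_0$ (Theorem~\ref{BCacf}) and $\ccm$ (Theorem~\ref{BCccm}), via Weil's group-chunk theorem, the Macpherson--Pillay O'Nan--Scott theorem~\cite{macpherson1995primitive} to reduce to a simple socle, and Popov's computation of generic transitivity degrees~\cite{Popov2007}.

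Measured against that, your proposal reaches the correct verdict and essentially the right architecture. You do not claim a proof; you correctly identify that the obstruction to an unconditional argument is the recovery of an algebraically closed field and of algebraic-group structure from bare generic transitivity in an arbitrary finite Morley rank group, which in the simple case is essentially the Cherlin--Zilber algebraicity problem. Your preliminary steps are sound: the telescoping rank count $\RM(G)=k\,\RM(S)+\RM(G_k)$ along the stabilizer chain is correct (each successive orbit is generic in $S$, so each drop is exactly $\RM(S)$), and the reductions to connected, faithful, transitive, (generically) primitive actions match the ones the paper performs or cites from~\cite{BC2008}. Your roadmap --- socle analysis, identification of the action as projective, then Popov --- is precisely what the paper executes in Section~\ref{section-bcalg}, where algebraicity of the socle comes for free. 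The only structural difference is one of emphasis: the paper treats the identification statement~\ref{BCConj} as primary and deduces the bound~\ref{BCboundConj} from sharpness of the $\PGL_{n+1}$ action, whereas you aim at the bound and use the identification as an intermediate step; in substance these coincide. So there is no gap in your reasoning beyond the one you yourself flag, and that gap is exactly why the statement remains a conjecture rather than a theorem.
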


In fact, Borovik and Cherlin make the following more precise conjecture:

\begin{conjecture}[Borovik-Cherlin~\cite{BC2008}]
\label{BCConj}
Let $(G,S)$ be a connected homogeneous space of finite Morley rank with $n:=\RM(S)>0$.
If $G$ acts generically $(n+2)$-transitively then $(G, S)$ is isomorphic to the natural action of $\PGL_{n+1} (F) $ on $\mathbb P ^n (F)$, for some algebraically closed field $F$.
\end{conjecture}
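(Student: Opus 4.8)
The plan is to prove the statement by induction on $n=\RM(S)$, following the strategy Borovik and Cherlin propose, modeled on Popov's treatment in the algebraic category but carried out with the structure theory of groups of finite Morley rank in place of algebraic geometry. Throughout I would take $G$ connected and faithful and exploit the rank computation forced by generic transitivity: since the diagonal action on $S^{n+2}$ has a generic orbit $\mathcal O$, one has $\RM(\mathcal O)=\RM(S^{n+2})=n(n+2)$, and by orbit--stabilizer rank additivity the stabilizer of a generic $(n+2)$-tuple $\bar a$ satisfies $\RM(G)=n(n+2)+\RM(G_{\bar a})$. A reduction I would attempt first is to show $G_{\bar a}$ is finite, forcing $\RM(G)=n(n+2)=(n+1)^2-1$ to match $\RM(\PGL_{n+1})$; since the target action is sharply transitive on projective frames, this is the correct normalization, and I would try to extract it from the maximality inherent in $(n+2)$-transitivity together with a descending stabilizer-chain argument on the coordinates of $\bar a$.

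The base case is $n=1$: a connected group of finite Morley rank acting generically $3$-transitively on a strongly minimal set $S$. Here I would appeal to the theory of generically multiply transitive actions on rank-one sets, whose role is to produce a definable algebraically closed field $F$ and identify the action with that of $\PGL_2(F)$ on $\mathbb P^1(F)$; the step that manufactures $F$ is a Zilber-field construction applied to an abelian subquotient arising from a point stabilizer.

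For the inductive step I would analyze point stabilizers. Fixing a generic $a\in S$, the connected stabilizer $G_a^\circ$ acts on $S$, and the geometric picture to reconstruct is projection from $a$: one wants a definable rank-$(n-1)$ homogeneous space $\bar S$ of ``directions at $a$'' on which $G_a^\circ$ acts generically $(n+1)$-transitively, together with the rank-one $G_a^\circ$-orbits (the ``lines through $a$'') that are the fibres of $S\dashrightarrow\bar S$. By the inductive hypothesis $(G_a^\circ,\bar S)\cong(\PGL_n(F),\mathbb P^{n-1}(F))$, which supplies a definable field $F$ and a projective structure on the directions at $a$. The remaining work is to glue: promote the line structure and the field $F$ to a definable incidence geometry on all of $S$ satisfying the axioms of $n$-dimensional projective space over $F$, and then invoke a fundamental-theorem-of-projective-geometry argument to recover $S\cong\mathbb P^n(F)$ and $G\cong\PGL_{n+1}(F)$ as actions.

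The hard part---indeed the reason the conjecture remains open in the general finite Morley rank category---is precisely the manufacture and coherence of this incidence geometry: producing the definable family of lines, proving that two generic points determine a unique line, and showing that the field recovered at one point agrees with the field recovered at a second, so that a single $F$ underlies the whole configuration. One must also exclude the exotic stabilizer configurations---bad fields, unexpected definable sections, non-split extensions---that the abstract category permits but that algebraic geometry forbids. Controlling point stabilizers by purely group-theoretic means, without the algebraic-geometric input that Popov's work supplies, is the crux, and I would expect any complete solution to engage with the classification program for simple groups of finite Morley rank; by contrast, in the algebraic and meromorphic categories these obstructions dissolve, which is exactly why Theorem~\ref{C} is within reach there.
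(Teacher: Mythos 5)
You are attempting to prove Conjecture~\ref{BCConj}, but this statement is an \emph{open conjecture}: the paper never proves it in the general finite Morley rank category, and neither do you. Your proposal is a research program rather than a proof, and its central step is explicitly left unresolved by you: you yourself say that manufacturing the definable incidence geometry --- producing the family of lines, proving two generic points determine a unique line, showing the fields recovered at different points coincide --- is ``the hard part'' and ``the reason the conjecture remains open.'' A proof cannot defer its crux. Concretely, in your inductive step there is no construction of the rank-$(n-1)$ space $\bar S$ of directions at $a$, no argument that $G_a^\circ$ acts on it generically $(n+1)$-transitively (this is false in general without further hypotheses and is one of the delicate points in the rank-$2$ case handled by Alt{\i}nel--Wiscons~\cite{altinel2018recognizing}), and no mechanism for the fundamental-theorem-of-projective-geometry gluing. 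Your preliminary ``normalization'' is also unjustified: finiteness of the generic stabilizer $G_{\bar a}$ (sharpness of the generic $(n+2)$-transitivity) is not a formal consequence of generic $(n+2)$-transitivity; it is something one knows only \emph{after} identifying the action with $\big(\PGL_{n+1}(F),\PP^n(F)\big)$, so it cannot be used as an input.

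For contrast, note what the paper actually establishes and how. It proves the conjecture only for homogeneous spaces definable in $\ACF_0$ and in $\ccm$ (Theorems~\ref{BCacf} and~\ref{BCccm}, i.e., Theorem~\ref{C}), by a route quite different from your incidence-geometric induction: first the Weil group-chunk theorem~\cite{weil} identifies $(G,S)$ with an algebraic homogeneous space; then one reduces to the definably primitive case and applies the O'Nan--Scott theorem of Macpherson--Pillay~\cite{macpherson1995primitive}, together with rank counting ($\RM(G)\geq n(n+2)$ from generic transitivity), to reduce to $G$ a simple linear algebraic group; finally Popov's bounds on generic transitivity degree~\cite{Popov2007} (ruling out type $E_6$, and for type $A_\ell$ reducing to maximal parabolics and computing the degree for grassmannians) pin down $(\PGL_{n+1},\PP^n)$. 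Every one of these tools depends on algebraicity (or, in $\ccm$, on the Chevalley-type structure theory of meromorphic groups), which is precisely the input unavailable in the abstract finite Morley rank setting --- and precisely what your program would have to replace by hand. So the gap is not a fixable local error: the statement you set out to prove is open, and your proposal correctly identifies, but does not close, the missing step.
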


Let us spell out how~\ref{BCboundConj} follows from~\ref{BCConj}.
First of all, as is pointed out in the proof of~\cite[Lemma~1.9]{BC2008}, as long as $k>1$ (which we may assume), the action of the connected component $G^\circ$ on $S$ is also generically $k$-transitive.
So we may assume that $G$ is connected.
We can also assume that $G$ acts faithfully (by replacing $G$ with an appropriate quotient) and transitively (by replacing $S$ with a generic orbit).
That is, we may assume $(G,S)$ is a connected homogeneous space that is generically $k$-transitive.
Now, suppose that $k>n+2$.
The $(G,S)$ is also generically $(n+2)$-transitive, and hence, by Conjecture~\ref{BCConj}, we may assume that $(G,S)=\big(\PGL_{n+1} (F), \mathbb P ^n (F)\big)$.
But, it is a fact that the action of $\PGL_{n+1}$ on $\mathbb P^n$ is not generically $k$-transitive for $k>n+2$; indeed, it is {\em sharply generically} $(n+2)$-transitive.
So $k\leq n+2$, as desired.

Here is how generic transitivity arises in considering degree of nonminimality:

\begin{proposition}
\label{gtbound}
We work under Assumption~\ref{nlm}.
Suppose $p\in S(A)$ is a stationary finite rank type with $d:=\nmdeg(p)>1$.
Then there exists a stationary type $q\in S(A)$ interalgebraic with $p$, and a nonmodular minimal partial type $r$ over the empty set, such that $q$ is $r$-internal, and if we let $G:=\aut_A(q/r)^\circ$ be the connected component of the binding group and $S:=q(\U)$ then $(G, S)$ is a definable homogeneous space that is generically $(d-1)$-transitive.
\end{proposition}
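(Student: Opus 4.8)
The plan is to first locate, over the base $A$ itself, both an honestly internal avatar of $p$ and the nonmodular minimal type witnessing its internality, and then to read off generic transitivity from the interplay between Proposition~\ref{wo} and Lemma~\ref{connectedbind}.

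Since $\nmdeg(p)=d>1$, Proposition~\ref{internality} supplies a nonmodular minimal type $r_0$, a priori over parameters larger than $A$, to which $p$ is almost internal. To descend the base to $A$, I would invoke Assumption~\ref{nlm}: the type $r_0$ is nonorthogonal to some minimal type over $\emptyset$, which is again nonmodular because local modularity is preserved under nonorthogonality of minimal types. Its nonforking extension to $A$ yields a nonmodular minimal type $r\in S(A)$, and since $r_0$ and $r$, being nonorthogonal minimal types, are almost internal to one another over suitable parameters, almost $r_0$-internality of $p$ upgrades to almost $r$-internality. Next I would pass to an honestly internal type: it is standard that an almost $r$-internal type is interalgebraic over $A$ with an $r$-internal type $q\in S(A)$, and this is precisely the step that licenses the binding-group machinery. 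By Lemma~\ref{basicprop}(c) we then have $\nmdeg(q)=d$, while $U(q)=U(p)>1=U(r)$, so $q$ and $r$ meet the hypotheses of Proposition~\ref{wo}.

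The heart of the argument is then essentially formal. Set $m:=\min\{k:q^{(k)}\text{ is not weakly orthogonal to }r(\U)\}$. Proposition~\ref{wo} gives $d\le m$, whence $d-1<m$, so $q^{(d-1)}$ is weakly orthogonal to $r(\U)$. As $q^{(d-1)}$ is stationary, $r$-internal and weakly $r(\U)$-orthogonal, Lemma~\ref{connectedbind} shows that the connected component of $\aut_A(q^{(d-1)}/r)$ acts transitively on $q^{(d-1)}(\U)$. Using the diagonal identification $\aut_A(q^{(d-1)}/r)=\aut_A(q/r)$ recorded in Section~2, this says exactly that $G=\aut_A(q/r)^\circ$ acts transitively, under its diagonal action, on $q^{(d-1)}(\U)\subseteq S^{d-1}$, where $S=q(\U)$. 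But $q^{(d-1)}(\U)$ is the locus of $A$-independent $(d-1)$-tuples, the generic part of $S^{d-1}$, whose complement has strictly smaller Morley rank; hence this single $G$-orbit witnesses generic $(d-1)$-transitivity in the sense of Definition~\ref{genBC}. Finally, applying Lemma~\ref{connectedbind} to $q$ itself --- which is weakly orthogonal to $r(\U)$ since $d-1\ge1$ --- shows that $G$ acts transitively on $S$; being a subgroup of the permutation group of $S$ it acts faithfully; and total transcendence makes both $G$ and $S$ definable. Thus $(G,S)$ is a definable homogeneous space.

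I expect the genuine difficulty to lie in the two descent moves of the second paragraph --- transferring almost internality across nonorthogonality so as to land over $A$, and upgrading almost internality to honest internality through an interalgebraic type --- both of which demand care with parameters even though they rest on standard stability-theoretic facts. Once $q$ and $r$ are in hand over $A$, the generic $(d-1)$-transitivity follows mechanically, the one point worth checking being that the set of $A$-independent tuples is genuinely generic in $S^{d-1}$, which is routine in the totally transcendental setting.
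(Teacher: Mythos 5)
Your proposal follows the paper's own proof essentially step for step: Proposition~\ref{internality} plus Assumption~\ref{nlm} to obtain a nonmodular minimal $r\in S(A)$ (as a nonforking extension of a type over $\emptyset$, using that almost internality transfers across nonorthogonality of minimal types), passage to an honestly $r$-internal $q\in S(A)$ interalgebraic with $p$ (the paper cites Lemma~3.6 of the Jin--Moosa reference for exactly this), $\nmdeg(q)=d$ by Lemma~\ref{basicprop}(c), Proposition~\ref{wo} to get weak $r(\U)$-orthogonality of $q^{(d-1)}$, and Lemma~\ref{connectedbind} together with the diagonal identification $\aut_A(q/r)=\aut_A(q^{(d-1)}/r)$ to get transitivity of $G$ on both $S$ and $q^{(d-1)}(\U)$. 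So in structure and in every key lemma invoked, this is the paper's argument.

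The one place where your justification is inadequate is the step you dismiss as ``routine in the totally transcendental setting'': the claim that $\RM\big(S^{d-1}\setminus q^{(d-1)}(\U)\big)<\RM(S^{d-1})$. In a general totally transcendental theory this is not routine. Non-independence is detected by a drop in $U$-rank, whereas Definition~\ref{genBC} is stated in terms of Morley rank, and in general Morley rank need neither coincide with $U$-rank nor be additive on cartesian powers, so ``the independent tuples form a Morley-rank-generic subset of $S^{d-1}$'' does not follow from total transcendence alone. The paper closes precisely this gap by observing that $S$ is internal to a minimal type, whence $U$-rank and Morley rank agree on the induced structure on $S$ and its powers; the Lascar inequalities for $U$-rank then give the strict rank drop on the locus of non-independent tuples. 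Since in your setting $S$ is indeed internal to the minimal type $r$, the needed fact is available and your proof goes through, but the correct justification is internality to a minimal type, not total transcendence.
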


\begin{proof}
By Proposition~\ref{internality}, $p$ is almost internal to some nonmodular minimal type.
By Assumption~\ref{nlm}, that type is nonorthogonal to a (nonmodular) minimal partial type $r$ over the empty set, such that the induced structure on $r(\U)$ eliminates imaginaries.
So $p$ is almost $r$-internal.
From almost $r$-internality we obtain $q\in S(A)$ that is interalgebraic with $p$ and outright $r$-internal (see, for example, Lemma~3.6 of~\cite{jin-moosa}).
Note that by Lemma~\ref{basicprop}(c), $\nmdeg(q)=d$ as well.

Now, the binding group $\aut_A(q/r)$ and its action on $S:=q(\U)$ is definable as $T$ is totally transcendental.
Let $k$ be least such that $q^{(k)}$ is not weakly orthogonal to $r(\U)$.
By Proposition~\ref{wo}, $k\geq d$.
In particular, $q$ is weakly orthogonal to $r(\U)$.
It follows that $\aut_A(q/r)$ acts transitively on $S$, and that $S$ is isolated.
In fact, by Lemma~\ref{connectedbind}, the connected component $G$ also acts transitively on $S$.

By minimality of $k$ we have that $q^{(k-1)}$, and hence $q^{(d-1)}$, is weakly $r(\U)$-orthogonal.
Since $\aut_A(q/r)=\aut_A(q^{(d-1)}/r)$, we have that $G$ also acts transitively on $\mc O:=q^{(d-1)}(\mathcal U)\subseteq S^{d-1}$.
As $S$ is internal to a minimal type, $U$-rank and Morley rank agree on the induced structure on $S$, and so
$$\RM(S^{d-1}\setminus\mc O)<\RM(S^{d-1}).$$
That is, $G$ acts generically $(d-1)$-transitively on $S$.\end{proof}

We obtain the following conditional bound on the degree of nonminimality:

\begin{theorem}
\label{bound}
Suppose $T$ satisfies Assumption~\ref{nlm} and let $p\in S(A)$ be a stationary type of finite rank.
\begin{itemize}
\item[(a)]
If Conjecture~\ref{BCboundConj} holds for finite rank group actions definable in $T$ then $\nmdeg(p)\leq U(p)+3$.
\item[(b)]
If Conjecture~\ref{BCConj} holds for finite rank homogeneous spaces definable in $T$ then $\nmdeg(p)\leq U(p)+2$.
\end{itemize}
\end{theorem}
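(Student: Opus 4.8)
The plan is to feed the generically multiply-transitive homogeneous space produced by Proposition~\ref{gtbound} into the relevant Borovik--Cherlin statement. Throughout we may assume $d:=\nmdeg(p)>1$, since otherwise $\nmdeg(p)\leq 1\leq U(p)+2$ and both bounds are immediate. As $d>1$ forces $p$ to be nonminimal, we have $n:=U(p)\geq 2$. Applying Proposition~\ref{gtbound} we obtain a stationary $q\in S(A)$ interalgebraic with $p$, a nonmodular minimal $r\in S(A)$ to which $q$ is internal, and the binding-group homogeneous space $(G,S)$, where $G=\aut_A(q/r)^\circ$ and $S=q(\U)$, which is definable and generically $(d-1)$-transitive. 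By Lemma~\ref{basicprop}(c), $\nmdeg(q)=d$, and $U(q)=U(p)=n$; since $S$ is internal to a minimal type, Morley rank and $U$-rank agree on it, so $\RM(S)=n$.

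For part~(a) this is then essentially immediate. The action of $G$ on $S$ is a finite Morley rank action that is generically $(d-1)$-transitive on the infinite set $S$ (infinite as $\RM(S)=n\geq 2$). Conjecture~\ref{BCboundConj}, assumed to hold in $T$, gives $d-1\leq \RM(S)+2=n+2$, that is $\nmdeg(p)=d\leq U(p)+3$.

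For part~(b) I argue by contradiction, assuming $d\geq n+3$. Then $d-1\geq n+2$, so by the standard projection argument (a generic $(d-1)$-orbit projects onto a generic $(n+2)$-orbit), exactly as in the derivation of~\ref{BCboundConj} from~\ref{BCConj}, the action $(G,S)$ is generically $(n+2)$-transitive. Moreover $(G,S)$ is connected by construction, transitive by Proposition~\ref{gtbound}, and faithful since a binding group acts faithfully on the realisations of its type; and $\RM(S)=n>0$. Hence Conjecture~\ref{BCConj}, assumed to hold in $T$, applies and yields an isomorphism of homogeneous spaces $(G,S)\cong\big(\PGL_{n+1}(F),\mathbb P^n(F)\big)$ for some algebraically closed field $F$. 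But for this action the computation in Subsection~\ref{pgl} applies verbatim: taking two independent realisations of $q$, the generic point over them of the unique projective line they span is a nonalgebraic forking extension of $q$, the line being a proper ($U$-rank $1$) subvariety precisely because $n\geq 2$. Thus $\nmdeg(q)\leq 2$, contradicting $\nmdeg(q)=d\geq n+3\geq 5$. Therefore $d\leq n+2$, i.e.\ $\nmdeg(p)\leq U(p)+2$.

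The delicate point is entirely in part~(b). Proposition~\ref{wo} only bounds $\nmdeg(q)$ by the first length at which weak orthogonality to $r$ fails, and for the projective action that length is $n+3$, one more than we want; so the coarse conjecture~\ref{BCboundConj} alone cannot give the sharper bound. The gap can only be closed using the precise conclusion of Conjecture~\ref{BCConj}, namely that the action is actually $\PGL_{n+1}$ on $\mathbb P^n$, because there the degree of nonminimality collapses all the way to $2$ via the projective-line construction even though the action remains (sharply) generically $(n+2)$-transitive. I would verify carefully that this line computation transports across the abstract isomorphism of homogeneous spaces --- it does, being a statement about genericity and proper subvarieties in $S=\mathbb P^n(F)$ --- and that the passage to the connected component, together with any finite-index discrepancy between $G$ and the full binding group $\aut_A(q/r)$, is harmless for the upper bound $\nmdeg(q)\leq 2$.
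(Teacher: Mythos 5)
Your skeleton is the paper's own: reduce via Proposition~\ref{gtbound} to a generically $(d-1)$-transitive binding-group homogeneous space $(G,S)$ with $\RM(S)=U(p)$, and feed it to the Borovik--Cherlin statements; part~(a) is identical to the paper's argument and is fine. The gap is in part~(b), at exactly the point you flag as delicate and then wave through. The isomorphism supplied by Conjecture~\ref{BCConj} in an abstract theory $T$ is only an \emph{abstract} isomorphism of group actions; it preserves neither definability in $T$ nor Morley/$U$-rank on $S$. So ``the generic point over $Aw_1w_2$ of the projective line spanned by $w_1,w_2$'' is not yet meaningful in $T$: the image in $S$ of a line of $\PP^n(F)$ is a priori just a subset, with no definability and no assigned rank, and in particular your assertion that it is a ``proper ($U$-rank $1$) subvariety'' does not transport. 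The computation of $\S$\ref{pgl} does not apply verbatim, because there the line arose as a $\C$-subspace $W$ honestly definable in $\DCF_0$ over $K$, with its rank computed concretely; your proposed justification (``a statement about genericity and proper subvarieties in $\mathbb P^n(F)$'') is precisely the kind of statement that fails to cross an abstract isomorphism. (In $\DCF_0$ one can take the isomorphism definable by stable embeddedness of the constants, but that is Theorem~\ref{bound-dcf}, not the abstract Theorem~\ref{bound}(b).)

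What does cross an abstract isomorphism is the orbit structure of definable subgroups, and that is how the paper closes this step. Fix distinct $b,c\in S$ and let $H:=\stab(b,c)\leq G$, a subgroup definable in $T$ over $Abc$, with definable orbits. The isomorphism with $\big(\PGL_{n+1}(F),\PP^n(F)\big)$ shows that, besides $\{b\}$ and $\{c\}$, $H$ has exactly two orbits on $S$, both infinite (the punctured line and its complement). Since $\RM(S)=n$ with Morley degree one, one of them, say $\mathcal O=Hd$, has rank $<n$, and the generic type of $\mathcal O$ over $Abcd$ is a nonalgebraic forking extension of $q$ over three realisations; hence $\nmdeg(q)\leq 3<n+3=d$, the desired contradiction. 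Note this gives $3$, not your claimed $2$: to get $\nmdeg(q)\leq 2$ you would further need the small-rank orbit to be definable over $Aw_1w_2$ alone. That can in fact be arranged (the two infinite orbits have distinct ranks, so each is invariant under $\aut(\U/Aw_1w_2)$, hence $Aw_1w_2$-definable), but this argument is absent from your proposal, and it is unnecessary: the bound $3$ already suffices since $n\geq 2$.
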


\begin{proof}
We may assume that $n:=U(p)>1$ and $d:=\nmdeg(p)>1$.
By Proposition~\ref{gtbound} there is a stationary type $q\in S(A)$ that is interalgebraic with $p$, and a (nonmodular) minimal partial type $r$ over the empty set such that $q$ is $r$-internal and such that the connected component of the binding group $G:=\aut_A(q/r)^\circ$ acts definably and generically $(d-1)$-transitively on $S:=q(\U)$.
Since $\RM(S)=U(p)$ by internality to a minimal set, Conjecture~\ref{BCboundConj} implies that $d-1\leq n+2$.
This proves part~(a).

For part~(b), we need only rule out the extreme case when $d=n+3$.
But in that case $(G,S)$ is a generically $(n+2)$-transitive definable homogeneous space.
So, Conjecture~\ref{BCConj} applies and we have that $(G,S)$ is (abstractly) isomorphic to $\big(\PGL_{n+1}(F),\PP^n(F)\big)$ for some algebraically closed field~$F$.
Fix distinct elements $b,c\in S$, let $H:=\stab(b,c)\leq G$, and consider the action of $H$ on $S$.
Then, besides $\{b\}$ and $\{c\}$, $H$ has exactly two orbits in $S$, both of which are infinite.
Indeed, this is the case for $\big(\PGL_{n+1}(F),\PP^n(F)\big)$ where one of the orbits is the line on which $b$ and $c$ lie and the other is the complement of that line.
As $S$ is of Morley rank $n$ and Morley degree one, one of these infinite orbits, say $\mathcal O$, must be of rank $<n$.
Write $\mathcal O$ as $Hd$ for some $d\in S$, and let $\widehat q$ be the generic type of $\mathcal O$ over $Abcd$.
Then $\widehat q$ is a nonalgebraic forking extension of $q$.
It follows, by definition, that $\nmdeg(q)\leq 3$.
But this contradicts $\nmdeg(q)=d=n+3>4$.
Hence, $d\neq n+3$ and we have $d\leq n+2$, as desired.
\end{proof}

\begin{corollary}
Under Assumption~\ref{nlm}, if $p$ is stationary and of $U$-rank $2$ then $\nmdeg(p)\leq 4$.
\end{corollary}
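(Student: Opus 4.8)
The statement to prove is the final Corollary: under Assumption~\ref{nlm}, any stationary type $p$ of $U$-rank $2$ has $\nmdeg(p)\le 4$. The obvious plan is to invoke Theorem~\ref{bound} together with the fact, recorded in the text just before Assumption~\ref{nlm}, that the Borovik-Cherlin conjecture is known in Morley rank $2$ by~\cite{altinel2018recognizing}. Concretely, I would first reduce to the nontrivial case, assuming $\nmdeg(p)>1$; otherwise there is nothing to prove since the bound $4$ is trivially met.

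For the substance, I would run Proposition~\ref{gtbound}: under Assumption~\ref{nlm}, since $d:=\nmdeg(p)>1$ there is a type $q$ interalgebraic with $p$ and a nonmodular minimal $r\in S(A)$ with $q$ being $r$-internal, such that the connected binding group $G:=\aut_A(q/r)^\circ$ acts definably and generically $(d-1)$-transitively on $S:=q(\U)$. Because $q$ is interalgebraic with $p$ and $U(p)=2$, internality to a minimal type forces $\RM(S)=U(q)=2$. Now the generically $(d-1)$-transitive action of $G$ on the Morley rank $2$ set $S$ is exactly the situation governed by Conjecture~\ref{BCboundConj} with $n=\RM(S)=2$. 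Since that conjecture is a theorem in Morley rank $2$, we obtain $d-1\le \RM(S)+2 = 4$, i.e. $d\le 5$. This is one worse than claimed, so some sharpening is needed.

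The real content is therefore identical to the passage from part~(a) to part~(b) in the proof of Theorem~\ref{bound}: I would appeal to the \emph{precise} form, Conjecture~\ref{BCConj}, which is also known in Morley rank $2$ (indeed~\cite{altinel2018recognizing} recognizes the $\PGL$-action, not merely the bound). Thus the better route is to cite Theorem~\ref{bound}(b) directly: since Conjecture~\ref{BCConj} holds for finite rank (in particular Morley rank $2$) homogeneous spaces definable in $T$, we get $\nmdeg(p)\le U(p)+2 = 4$. The extreme case $d=n+3=5$ is ruled out exactly as in that proof, by exhibiting, inside the $\PGL_3$ on $\mathbb P^2$ model, a stabilizer $H=\stab(b,c)$ whose smaller infinite orbit yields a nonalgebraic forking extension over three realisations, contradicting $\nmdeg(q)=5$.

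**Main obstacle.** The only delicate point is confirming that the Morley rank $2$ case of the \emph{sharp} Borovik-Cherlin statement (Conjecture~\ref{BCConj}), and not just the coarse bound (Conjecture~\ref{BCboundConj}), is what~\cite{altinel2018recognizing} provides — this is what separates the bound $4$ from the weaker $5$. Granting that, the corollary is an immediate specialisation of Theorem~\ref{bound}(b) to $U(p)=2$, so I expect the proof to be essentially one or two lines: reduce to $\nmdeg(p)>1$, note $\RM(S)=2$, and apply Theorem~\ref{bound}(b) using the known Morley rank $2$ instance of the Borovik-Cherlin conjecture.
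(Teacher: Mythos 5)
Your proposal is correct and takes essentially the same route as the paper: the paper's proof is precisely the observation that Theorem~\ref{bound}(b) applies Conjecture~\ref{BCConj} only to a homogeneous space $(G,S)$ with $\RM(S)=U(p)=2$, where the conjecture is the theorem of Alt\i nel and Wiscons~\cite{altinel2018recognizing}. Your own caveat is the right one---Theorem~\ref{bound}(b) cannot be cited as a black box since its stated hypothesis is the full conjecture, but your reduction via Proposition~\ref{gtbound} to the rank-$2$ instance closes exactly that gap, just as the paper does.
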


\begin{proof}
In proving Theorem~\ref{bound}(b) we only applied Conjecture~\ref{BCConj} to a group action $(G,S)$ where $\RM(S)=U(p)$.
But, when $\RM(S)=2$, Conjecture~\ref{BCConj} is a theorem of Alt\i nel and Wiscons~\cite{altinel2018recognizing}.
\end{proof}

Actually we get more from the proof of Theorem~\ref{bound}.
We only applied the Borovik-Cherlin conjectures to group actions arising from internality to a nonmodular minimal type-definable set.
So one would expect that in cases where one understands well the nonmodular minimal types one could prove the conjectures outright for the relavant group actions and thus deduce the bound on degree of nonminimality.
And indeed, this is what we do in the next sections; we prove that Conjecture~\ref{BCConj} holds for algebraic groups in characteristic $0$ and use that to deduce the bound $\nmdeg(p)\leq U(p)+2$ in differentially closed fields and compact complex manifolds.
With a bit more work, we will get that down to $U(p)+1$ in these theories.

\begin{remark}
It is worth pointing out that the existence, unconditionally and in general, of {\em some} bound on $\nmdeg$ in terms of $U$-rank can be deduced from the above methods.
Indeed, it is shown in~\cite[Corollary~2.1 and Lemma~1.20]{BC2008} that there is a function $\tau:\omega\to\omega$ such that whenever $(G,S)$ is a transitive finite rank group action that is generically $k$-transitive then $k\leq\tau(\RM(S))$.
(Conjecture~\ref{BCboundConj} is that $\tau(n)=n+2$ works.)
It follows, under Assumption~\ref{nlm} and using the above methods, that $\nmdeg(p)\leq\tau(U(p))+1$. 
\end{remark}

\bigskip
\section{The Borovik-Cherlin conjecture in $\ACF_0$}
\label{section-bcalg}
\noindent
As has been suggested in various places~\cite{BC2008, altinel2018recognizing, borovik2019binding}, one should be able to prove the conjectures of Borovik and Cherlin for algebraic groups in characteristic zero using the work of Popov~\cite{Popov2007}  to deal with simple linear algebraic groups and then using the O'Nan-Scott Theorem of Macpherson and Pillay~\cite{macpherson1995primitive} to reduce to the simple case.
As we have not seen this carried out, we will do so here in some detail.

We work throughout this section in a sufficiently saturated $\U\models\ACF_0$.

First, we verify that that the finite Morely rank notion of generic transitivity given in~Definition~\ref{genBC} agrees with that introduced by Popov~\cite{Popov2007} for algebraic groups.

\begin{lemma}
Suppose $G$ is an algebraic group and $\alpha:G\times S\to S$ is an algebraic action of $G$ on an irreducible variety $S$.
Then, for all $n\geq 1$, the action $\alpha$ is generically $n$-transitive in the sense of definition~\ref{genBC} if and only if the induced action of $G$ on $S^n$ has a Zariski open orbit.
\end{lemma}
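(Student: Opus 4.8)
The plan is to translate both notions into dimension theory and then lean on the basic structure theory of orbits. The two inputs I would isolate at the outset are: (i) in $\U\models\ACF_0$ the Morley rank of a constructible set coincides with its Zariski dimension, so that in particular, since $S$ is irreducible and hence so is the product $S^n$, we have $\RM(S^n)=\dim S^n=n\dim S$; and (ii) every orbit of a morphic action of an algebraic group is locally closed, i.e.\ open in its own Zariski closure, with strictly lower-dimensional orbits making up the boundary. Fact~(ii) is the standard structure theorem for orbits and is the one genuinely geometric ingredient I would invoke; everything else is bookkeeping around the identification $\RM=\dim$ and the irreducibility of $S^n$.

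For the backward direction, suppose the diagonal action of $G$ on $S^n$ has a Zariski open orbit $\mathcal O$. Then $S^n\setminus\mathcal O$ is a proper Zariski closed subset of the irreducible variety $S^n$, so $\dim(S^n\setminus\mathcal O)<\dim S^n$. Passing to Morley rank via input~(i) gives $\RM(S^n\setminus\mathcal O)<\RM(S^n)$, and $\mathcal O$ itself is then an orbit witnessing generic $n$-transitivity in the sense of Definition~\ref{genBC}.

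For the forward direction, suppose there is an orbit $\mathcal O\subseteq S^n$ with $\RM(S^n\setminus\mathcal O)<\RM(S^n)$, equivalently $\dim(S^n\setminus\mathcal O)<\dim S^n$. Writing $S^n=\mathcal O\cup(S^n\setminus\mathcal O)$ and using that the second piece has strictly smaller dimension, we get $\dim\mathcal O=\dim S^n$. The one point that requires care, and the step I expect to be the only real obstacle, is that full dimensionality of a constructible set does not by itself force openness. This is exactly where input~(ii) enters: because $\mathcal O$ is an orbit it is open in its closure $\overline{\mathcal O}$, and $\overline{\mathcal O}$ is an irreducible closed subset of $S^n$ with $\dim\overline{\mathcal O}=\dim\mathcal O=\dim S^n$. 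By irreducibility of $S^n$ this forces $\overline{\mathcal O}=S^n$, and hence $\mathcal O$, being open in $\overline{\mathcal O}=S^n$, is Zariski open in $S^n$.

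In summary, the substantive content is confined to the forward direction's appeal to local closedness of orbits; the equivalence otherwise follows formally from $\RM=\dim$ together with the irreducibility of $S^n$. I would therefore state inputs~(i) and~(ii) explicitly, dispose of the easy backward implication first, and then spend the argument on showing that a full-dimensional orbit in an irreducible variety is forced to be open.
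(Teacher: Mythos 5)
Your proof is correct, and the backward direction coincides with the paper's. The forward direction, however, takes a different route. The paper argues openness of the generic orbit $\mathcal O$ from scratch: density of $\mathcal O$ plus constructibility gives a nonempty Zariski open $U\subseteq\mathcal O$, and then homogeneity (translating $U$ by group elements $\alpha(g,-)$, which are algebraic automorphisms of $S^n$ preserving $\mathcal O$) spreads this open neighbourhood to every point of $\mathcal O$. You instead black-box the standard structure theorem that orbits of algebraic group actions are locally closed, and then finish with a dimension count: $\dim\mathcal O=\dim S^n$ forces $\overline{\mathcal O}=S^n$, so ``open in its closure'' becomes ``open''. The two are close relatives --- the usual proof of the local-closedness theorem is exactly the constructibility-plus-translation argument the paper runs inline --- so what your version buys is brevity at the cost of an external citation, while the paper's version is self-contained, which suits its model-theoretic framing where the relevant input is that orbits are \emph{definable}, hence constructible. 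One small caveat in your write-up: you assert that $\overline{\mathcal O}$ is irreducible, which is not justified as stated, since the lemma does not assume $G$ connected (for disconnected $G$ an orbit need not be irreducible). Fortunately you never actually use it: the inference $\dim\overline{\mathcal O}=\dim S^n\Rightarrow\overline{\mathcal O}=S^n$ needs only the irreducibility of $S^n$, via the fact that a proper closed subset of an irreducible variety has strictly smaller dimension --- the same fact both proofs use in the backward direction. I would simply delete the irreducibility claim about $\overline{\mathcal O}$.
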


\begin{proof}
Since Morley rank agrees with dimension in $\ACF_0$, and a proper subvariety of an irreducible variety is of strictly smaller dimension, the existence of a Zariski open orbit for the action of $G$ on $S^n$ does imply generic $n$-transitivity of $(G,S)$.
For the converse, assume $\mc O\subseteq S^n$ is an orbit of $(G,S^n)$ whose complement is of strictly smaller dimension.
In particular, $\mc O$ is Zariski dense in $S^n$. 
We show that $\mc O$ is Zariski open.
Being a definable Zariski dense set, it does contain a nonempty Zariski open subset, $U$.
Fix $u\in U$.
For any $a\in\mc O$ there is $g\in G$ such that $\alpha(g,u)=a$.
Now, $f:=\alpha(g,-):S^n\to S^n$ is an algebraic automorphism that preserves $\mc O$, and hence $f(U)$ is Zariski open, contains $a$, and is contained in $\mc O$.
We have shown that every element of $\mc O$ has a Zariski open neighbournhood contained in $\mc O$.
This implies that $\mc O$ is Zariski open, as desired.
\end{proof}

\bigskip
\subsection{The case of simple linear algebraic groups}
We explain how Conjecture~\ref{BCConj} for simple linear algebraic groups in charactertistic zero can be deduced from the statements and arguments appearing in Popov~\cite{Popov2007}.

\begin{theorem}
\label{BCsimplealg}
Suppose $\alpha:G\times S\to S$ is an algebraic action of a simple linear algebraic group on a positive-dimensional irreducible variety in characteristic zero.
If $\alpha$ is transitive and generically $(n+2)$-transitive then $\alpha$ is isomorphic to the natural action of $\PGL_{n+1}$  on $\m P ^n$.
\end{theorem}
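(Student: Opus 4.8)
The plan is to extract the statement from Popov's classification of generically multiply transitive actions of simple algebraic groups~\cite{Popov2007}. Set $n=\dim S$, which coincides with $\RM(S)$ since Morley rank equals dimension in $\ACF_0$; thus the hypothesis is that $\alpha$ is generically $(n+2)$-transitive. First I would reduce to the faithful case: the kernel of $\alpha$ is a closed normal subgroup of the simple group $G$, so it is either finite and central or all of $G$, and the latter is impossible since $\alpha$ is transitive on a positive-dimensional variety. Replacing $G$ by $G/\ker\alpha$, which remains (almost) simple, we may assume $\alpha$ is faithful, and it then suffices to exhibit an isomorphism of actions $(G,S)\cong(\PGL_{n+1},\PP^n)$. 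By the lemma above, generic $(n+2)$-transitivity is equivalent to the diagonal action of $G$ on $S^{n+2}$ having a Zariski open orbit, which is precisely Popov's notion of generic multiple transitivity; so the hypotheses sit inside Popov's framework.

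The core of the argument is Popov's bound together with its equality case: for a nontrivial action of a simple algebraic group one has $\gtd\le\dim S+2$, and the extremal value $\gtd=\dim S+2$ is attained only by the action of $\PGL_{n+1}$ on $\PP^n$, which is sharply generically $(n+2)$-transitive. Since our hypothesis gives $\gtd\ge n+2$, we are forced into this extremal case. The real work here is bookkeeping: one must check that our setting matches the hypotheses under which Popov states his results, and then read off from his classification that no simple homogeneous space other than $(\PGL_{n+1},\PP^n)$ meets the bound.

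Finally I would upgrade the conclusion to a genuine isomorphism of group actions. A priori Popov's classification identifies the acting group only up to isogeny and the variety only up to $G$-equivariant birational equivalence. Here transitivity supplies the missing rigidity: a dominant $G$-equivariant rational map between two transitive $G$-varieties is a morphism arising from an inclusion of stabilizers, so a birational such map is an isomorphism, giving $S\cong\PP^n$ equivariantly; and because $G$ now acts faithfully on $\PP^n$, with $\PGL_{n+1}$ being the connected automorphism group realising that action, the isogeny $G\to\PGL_{n+1}$ must be an isomorphism. The main obstacle, then, is not any single deep step but the careful reconciliation of conventions — matching the finite-Morley-rank notion of generic transitivity to Popov's, extracting the precise equality clause, and passing from Popov's birational/isogeny conclusion to an honest isomorphism of actions using transitivity.
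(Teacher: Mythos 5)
The core step of your proposal cites, as a known result, something Popov does not prove. Popov's bounds on generic transitivity degree are expressed in terms of the type (equivalently, the rank) of the simple group, not in terms of $\dim S$: his Theorem~1 says that a simple group of type $A_\ell$ admits no action of generic transitivity degree exceeding $\ell+2$, that type $E_6$ allows degree at most $4$, and that every other type allows at most $3$. There is no statement in~\cite{Popov2007} of the form ``$\gtd(\alpha)\leq\dim S+2$, with equality attained only by $(\PGL_{n+1},\PP^n)$'', nor any classification of homogeneous spaces of simple groups from which such a statement could be ``read off''; that dimension-based statement \emph{is} the theorem being proven here, and the paper's treatment exists precisely because (as the authors remark) the derivation from Popov had been suggested but never carried out. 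So the step you describe as bookkeeping is where all the mathematics lives.

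Concretely, what is missing: to pass from Popov's rank-based bounds to the dimension-based conclusion one must (i) rule out the $E_6$ case, which the paper does by showing every positive-dimensional homogeneous space of $E_6$ has dimension too large for $\gtd=4$ to meet $n+2$, using the Liebeck--Seitz tables of maximal subgroups of exceptional groups (an input beyond Popov); and (ii) for $G=\PGL_{\ell+1}$, determine which stabilizers $H$ can occur. The paper does this by induction on $n$: the inductive hypothesis lets one replace $H$ by a maximal proper closed subgroup containing it (simple connectedness of $\PP^n$ then recovers $H$ itself), after which the dichotomy that a maximal closed subgroup is either parabolic or has reductive identity component applies; Popov's Lemma~6 (a reductive stabilizer forces $\gtd=1$) eliminates the reductive case, the maximal parabolic quotients are identified as Grassmannians $\GR(m,\ell+1)$, and Popov's formula $\gtd=\left\lfloor(\ell+1)^2/\bigl(m(\ell+1-m)\bigr)\right\rfloor$ then forces $m=1$, i.e.\ $S=\PP^\ell$ and $n=\ell$. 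The induction, the parabolic/reductive dichotomy, and the Grassmannian computation are genuine arguments, not reconciliations of conventions. Your surrounding steps (reduction to a faithful action, upgrading an equivariant identification of homogeneous spaces to a biregular isomorphism) are fine but are downstream of this missing core; the paper sidesteps the birational issue entirely by working with $G/H$ throughout.
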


\begin{proof}
As in~\cite{Popov2007}, by the {\em generic transitivity degree of $\alpha$}, denoted by $\gtd(\alpha)$ or $\gtd(G,S)$, we mean the supremum among all $d\geq0$ such that $\alpha$ is generically $d$-transitive.
We are given that $\gtd(\alpha)\geq n+2$.

We proceed by induction on $n$.
For the $n=1$ case we use the fact that the faithful linear algebraic group actions on curves are completely classified as the natural action of the additive or multiplicative group on the affine line, the semidirect product of the two acting by affine transformations, or $\PGL_2$ acting on $\mathbb P$ by projective transformations.
Of these, only the last is generically $3$-transitive.
So we may assume that $n>1$.

Let $k$ be an algebraically closed subfield over which $\alpha$ is defined.
Fix $x_0\in S(k)$ and let $H:=\operatorname{Stab}(x_0)$.
This is a proper closed subgroup of $G$ over $k$.
As $\alpha$ is transitive, we have a $G$-equivariant isomorphism between $G/H$ and $S$ given by $gH\mapsto \alpha(g,x_0)$.
So we may as well assume that $S=G/H$ and $\alpha$ is the natural action of $G$ on $G/H$. 

Theorem~1 of~\cite{Popov2007}  gives upper bounds on the generic transitive degree of the possible actions of a linear algebraic group depending on its classification-type.
In particular, there are only two types of simple linear algebraic groups that admit actions of generic transitive degree greater than $3$.
As $\gtd(\alpha)\geq n+2>3$, this applies to $\alpha$, and the two possibilities are:
\begin{itemize}
\item[(i)]
$G$ is of type $E_6$, and $\gtd(\alpha)=4$, or
\item[(ii)]
$G$ is of type $A_\ell$ and $\gtd(\alpha)\leq \ell+2$.
\end{itemize}
We first rule out case~(i).
Indeed, if $G$ is of type $E_6$ then it is of dimension $78$ and one has an explicit list of the classification-types (and hence dimensions) of the maximal proper closed subgroups of $G$ -- see, for example, Table~1 of~\cite{MR2044850}.
In particular, one sees that $\dim(H)\leq 22$, and hence $n=\dim(G/H)>3$.
But then $\gtd(\alpha)=4$ contradicts $\gtd(\alpha)\geq n+2$.

Therefore $G$ is of type $A_\ell$, and so, being simple, $G=\PGL_{\ell+1}$.
Our strategy now is to show that $H$ must be a maximal standard parabolic\footnote{A closed subgroup is {\em parabolic} if the quotient is projective and is {\em standard} if it contains the subgroup of upper triangular matrices in $G$.} subgroup $P\leq\PGL_{\ell+1}$, in which case the action of $\PGL_{\ell+1}$ on the cosets of $P$ agrees with its action on a grassmanian, and one can deduce that the only way that action can be of generic transitivity degree $\geq n+2$ is if $n=\ell$ and $\PGL_{n+1}/P=\mathbb P^n$.

Let $H'$ be a maximal proper closed subgroup of $G$ containing $H$.
The projection $G/H\to G/H'$ is $G$-equivariant, and hence, by Lemma~2(i) of~\cite{Popov2007},
$$\gtd(G,G/H')\geq\gtd(G,G/H)\geq n+2.$$
If $\dim(G/H')=d<n$ then we get $\gtd(G,G/H')>d+2$, which contradicts the inductive hypothesis that $(G,G/H')$ is isomorphic to $(\PGL_{d+1},\mathbb P^d)$.
So $d=n$ and $H$ is of finite index in $H'$.
It follows that $G/H\to G/H'$ is a finite \'etale cover.
Hence, if we prove that $G/H'=\PP^n$ then it would follow that $H=H'$.
So it suffices to prove the result for $(G,G/H')$.
That is, we may assume $H=H'$ is a maximal proper closed subgroup of $G$.

It is a fact that for maximal proper closed subgroups, either the connected component is reductive or the subgroup is parabolic -- see~\cite[$\S30.4$]{humphreys}.
Let $H^\circ$ be the connected component of $H$.
By Lemma~2(ii) of~\cite{Popov2007}, we have that $\gtd(G:G/H)=\gtd(G:G/H^\circ)$.
Now, if $H^\circ$ were reductive, then by Lemma~6 of~\cite{Popov2007} we would have $\gtd(G:G/H^\circ)=1$, contradicting that it is $\geq n+2$.
Hence $H^\circ$ is not reductive and $H$ is parabolic.
(Note that this implies $H=H^\circ$, see~\cite[$\S23.1$~Corollary~B]{humphreys}.)

Every parabolic subgroup is conjugate to a standard one (\cite[$\S29.3$]{humphreys}).
As conjugation induces a $G$-equivariant algebraic isomorphism, we may assume that $H=P$ is a standard parabolic subgroup.
Now, it is well known that a standard parabolic subgroup $P\leq \PGL_{\ell+1}$ is the stabiliser of a (partial) flag in $\mathbb A^{\ell+1}$ under the natural action of $\GL_{\ell+1}$, and so the homogeneous space $\PGL_{\ell+1}/P$ is a flag variety.
When $P$ is maximal (as $H$ is), the flag is of length $1$; it is just a dimension $m>0$ proper subspace of $\mathbb A^{\ell+1}$.
Hence, in that case, $\PGL_{\ell+1}/P$ is the grassmannian $\GR(m,\ell+1)$.
So
$(G,G/H)=\big(\PGL_{\ell+1},\GR(m,\ell+1)\big)$.
In particular, $n=\dim\GR(m,\ell+1)=m(\ell+1-m)$.
The generic transitivity degree of $\big(\PGL_{\ell+1},\GR(m,\ell+1)\big)$ is computed in Theorem~3 of~\cite{Popov2007} to be the greatest integer bounded above by $\displaystyle \frac{(\ell+1)^2}{m(\ell+1-m)}$.
As $\gtd(G,G/H)\geq n+2$, we get that
$\displaystyle \left\lfloor\frac{(\ell+1)^2}{m(\ell+1-m)}\right\rfloor\geq m(\ell+1-m)+2$.
It is easily checked that this forces $m=1$.
So $n=\ell$ and the action of $G$ on $G/H$ is that of $\PGL_{n+1}$ on $\GR(1,n+1)=\PP^n$.
\end{proof}

\bigskip
\subsection{The general case}
\label{reducetosimple}
We now establish Conjecture~\ref{BCConj} for $\ACF_0$.

\begin{theorem}
\label{BCacf}
Suppose, in $\ACF_0$, we have a definable connected homogeneous space $(G,S)$ with $S$ of dimension $n>0$.
If $G$ acts generically $(n+2)$-transitively on $S$ then $(G,S)$ is definably isomorphic to $\big(\PGL_{n+1},\PP^n\big)$.
\end{theorem}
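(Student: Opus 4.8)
The plan is to pass to the algebraic category and reduce, via an O'Nan--Scott analysis, to the simple case already settled in Theorem~\ref{BCsimplealg}. Since we work in $\ACF_0$, the connected homogeneous space $(G,S)$ is (definably isomorphic to) a faithful transitive algebraic action of a connected algebraic group $G$ on an irreducible variety $S$, with $\dim=\RM$ throughout; so ``generically $(n+2)$-transitive'' means the diagonal action on $S^{n+2}$ has a Zariski open orbit. I would argue by induction on $n=\dim S$. The base case $n=1$ is immediate from the classification of faithful algebraic group actions on curves: the only such action of generic transitivity degree $\geq 3$ is $(\PGL_2,\PP^1)$, exactly as in the $n=1$ case of Theorem~\ref{BCsimplealg}.

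For the inductive step ($n\geq 2$) I would first reduce to a primitive action. Fix $x_0$ and set $H:=\stab(x_0)$, so that $S\cong G/H$, and let $K\supseteq H$ be a maximal proper closed subgroup. The projection $G/H\to G/K$ is $G$-equivariant, so by monotonicity of $\gtd$ under equivariant maps (Lemma~2(i) of~\cite{Popov2007}) we get $\gtd(G,G/K)\geq\gtd(G,G/H)\geq n+2$. If $d:=\dim(G/K)<n$, then $(G,G/K)$ is generically $(d+2)$-transitive, so by the induction hypothesis it is isomorphic to $(\PGL_{d+1},\PP^d)$, whose generic transitivity degree is exactly $d+2<n+2$ -- a contradiction. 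Hence $d=n$, $H$ has finite index in $K$, and $G/H\to G/K$ is a finite \'etale cover. It therefore suffices to treat the primitive homogeneous space $(G,G/K)$: once we show $G/K\cong\PP^n$, the characteristic-zero simple connectedness of $\PP^n$ forces the connected \'etale cover $G/H\to\PP^n$ to be an isomorphism, whence $H=K$ and $(G,S)\cong(\PGL_{n+1},\PP^n)$.

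So assume $(G,G/K)$ is primitive of dimension $n\geq 2$ and generically $(n+2)$-transitive. Here I would invoke the finite Morley rank O'Nan--Scott theorem of Macpherson and Pillay~\cite{macpherson1995primitive} and split into cases according to the socle. The affine case (abelian socle acting regularly) is excluded because the full affine group $\mathbb{A}^n\rtimes\GL_n$ on $\mathbb{A}^n$ is only generically $(n+1)$-transitive, so no affine action can reach $n+2$. In the product- and diagonal-type cases the socle is a power $T^m$ with $m\geq 2$ and the action carries a compatible product structure that bounds the generic transitivity degree by a small constant; since $n+2\geq 4$, these too are excluded. This leaves the almost simple case, where the socle $T$ is a nonabelian simple algebraic group with $T\trianglelefteq G\leq\aut(T)$. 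As $\operatorname{Out}(T)$ is finite and $G$ is connected, $G/T$ is a connected finite group, hence trivial, so $G=T$ is simple; and a nonabelian simple algebraic group is linear. Theorem~\ref{BCsimplealg} then yields $(G,G/K)\cong(\PGL_{n+1},\PP^n)$, completing the induction.

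The main obstacle I anticipate is the careful deployment of the finite Morley rank O'Nan--Scott theorem together with the exclusion of the product- and diagonal-type configurations: one must check that the version available in~\cite{macpherson1995primitive} applies to our connected algebraic homogeneous spaces, and then genuinely bound the generic transitivity degree in the product/diagonal cases (the affine and almost simple cases being comparatively routine, the latter because connectedness collapses $G$ onto its socle). By contrast, the reduction to primitivity and the final lift across the \'etale cover are clean, the latter relying only on the simple connectedness of projective space in characteristic zero.
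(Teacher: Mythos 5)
Your proposal is correct and takes essentially the same route as the paper's proof: algebraization of $(G,S)$, induction on $n$ with reduction to the primitive case via maximal closed subgroups and the simple connectedness of $\PP^n$, the finite Morley rank O'Nan--Scott theorem of~\cite{macpherson1995primitive}, and finally Theorem~\ref{BCsimplealg} after connectedness and finiteness of outer automorphism groups collapse $G$ onto its simple socle. The product/diagonal exclusion that you flag as the remaining obstacle is discharged in the paper by a one-line dimension count: in those cases \cite{macpherson1995primitive} gives that the socle factor(s) act regularly on $S$, and connectedness forces $G$ to equal its socle, so $\dim G\leq 2n$, contradicting the bound $\dim G\geq n(n+2)$ forced by generic $(n+2)$-transitivity; the analogous count (via~\cite[Lemma~3.2]{BC2008}) is what the paper uses in the abelian-socle case, where your alternative argument through the affine group also works.
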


\begin{proof}
First of all, as a consequence of the Weil group-chunk (or rather homogeneous-space-chunk) theorem~\cite{weil}, we know that $(G,S)$ is definably isomorphic to an algebraic group action.
That is, we may assume $\alpha:G\times S\to S$ is an algebraic action of an algebraic group on an irreducible variety -- which is faithful, transitive, and generically $(n+2)$-transitive -- and we aim to prove that $\alpha$ is isomorphic (as an algebraic group action) to the natural action of $\PGL_{n+1}$ on $\PP^n$.

Again we proceed by induction on $n$.
When $n=1$ we only have, besides the linear algebraic group actions on curves (of which, as mentioned in the previous section, only the action of $\PGL_2$ on $\mathbb P$ is generically $3$-transitive), the action of an elliptic curve on itself (which, being regular, has generic transitive degree $1$).
So we may assume that $n>1$.

Next, we reduce to the {\em definably primitive} case, meaning that there is no nontrivial definable equivalence relation $E$ on $S$ that is $G$-equivariant in the sense that $E(x,y)\iff E(gx,gy)$ for all $g\in G$.
This reduction is actually effected in general for finite Morley rank groups by Borovik and Cherlin in~\cite{BC2008} (see the bottom of page~35), but the situation is much simpler in the case of algebraic groups.
Indeed, exactly as in the proof of Theorem~\ref{BCsimplealg}, transitivity along with the induction hypothesis and the simple-connectedness of $\PP^n$, allows us to reduce to the case that $(G,S)=(G,G/H)$ where $H$ is a maximal proper closed subgroup of $G$.
As all definable subgroups of $G$ are closed, we have that $H$ is a maximal proper definable subgroup.
But this is equivalent to $(G,G/H)$ being definably primitive.

So we may assume that $(G,S)$ is definably primitive.
This puts us into the context of the O'Nan-Scott theorem of Macpherson and Pillay, namely~\cite[Theorem~1.1]{macpherson1995primitive}, which we now use to reduce to $G$ being a simple linear algebraic group.

Let $B$ be the {\em definable socle} of $G$, that is, the subgroup generated by the minimal normal definable subgroups of $G$.
Then, by standard finite Morley rank techniques, $B$ is itself definable.
The O'Nan-Scott theorem gives a list of possibilities, labelled as 1, 2, 3, 4a(i), 4a(ii), 4b in~\cite[Theorem~1.1]{macpherson1995primitive},  for the structure of $B$ and how close $G$ is to $B$.
Before dealing with the various cases individually (and even stating what they are) let us point out that,  as explained right after the statement of Theorem~1.1 in~\cite{macpherson1995primitive}, the fact that~$G$ is connected already rules out cases~4a(i) and~4b, and so we will not discuss these.

In case~1 of~\cite[Theorem~1.1]{macpherson1995primitive}, $B$ is either torsion-free divisible or an elementary abelian $p$-group.
The known structure of commutative algebraic groups in characteristic zero rules out the latter (see for example Theorem~5.3.1 of~\cite{brion}).
In the torsion-free divisible case we have Lemma~3.2 of~\cite{BC2008} which tells us that $\RM(G)\leq n^2+n$.
But generic $(n+2)$-transitivity implies that $\RM(G)\geq (n+2)n$.
This contradiction rules out case~1.

We are left with three cases, namely 2, 3, and 4a(ii) of~\cite[Theorem~1.1]{macpherson1995primitive}, which we now finally state:
\begin{itemize}
\item[2.]
$B$ is noncommutative simple, acts regularly on $S$, and $G$ is an extension of $B$ by a subgroup $H\leq \aut B$ such that $H\cap B=(e)$, or
\item[3.]
$B$ is noncommutative simple and $B\leq G\leq\aut B$, or
\item[4a(ii).]
$B=T_1\times T_2$ where $T_1, T_2$ are (definably isomorphic) infinite simple noncommutative definable normal subgroups of $G$ both acting regularly on $S$, and $B\leq G\leq W$ where $W$ is an extension of $B$ by $(\aut T_1/T_1)\times\sym_2$.
\end{itemize}
Here, for a simple group $T$ we view $T\leq \aut T$ with elements acting by conjugation.
Of course, the actual Theorem~1.1 in~\cite{macpherson1995primitive} gives more information in each of these cases; we have only recorded what we require.

We first argue that, in all three cases, $G=B$.
This is because for a simple linear algebraic group $T$, the outer automorphism group $\aut T/T$ is finite (see~\cite[$\S27.4$]{humphreys}).
Note that $B$ in cases~2 and~3, and $T_1, T_2$ in case~4a(ii), are simple linear algebraic groups as they are simple noncommutative definable subgroups of the algebraic group $G$.
So, inspecting the three cases, we see that in each of them $G$ is a finite extension of $B$.
But then connectedness forces $G=B$.

Case~2 is therefore impossible as $G$ acting regularly on $S$ is incompatible with it acting generically $(n+2)$-transitively.
Case~4a(ii) is also impossible for similar reasons: as $T_1$ and $T_2$  act regularly on $S$, $G=T_1\times T_2$ would imply that $\dim G=2n$, again contradicting generic $(n+2)$-transitivity (which implies $\dim G\geq (n+2)n$).

So we are in case~3 and $G=B$ is itself a simple linear algebraic group.
Theorem~\ref{BCsimplealg} applies, and $(G,S)=\big(\PGL_{n+1},\PP^n\big)$, as desired.
\end{proof}

\bigskip
\section{Differentially closed fields}
\label{section-dcf}
\noindent
Equipped with the truth of Conjecture~\ref{BCConj} in~$\ACF_0$, we can now established the desired upper bound on degree of minimality in theories where $\ACF_0$ is the site of all nonmodular minimal types.
For concreteness we here only consider $\DCF_{0}$, the theory of differentially closed fields of characteristic zero.

We work in a saturated model $\U\models \DCF_{0}$ with field of constants $\C$.

We begin by improving Proposition~\ref{wo} to a strict inequality:

\begin{proposition}
\label{wodcf}
Suppose $p\in S(A)$ is stationary nonalgebraic type of finite rank that is $\C$-internal and weakly $\C$-orthogonal.
Then
$$\nmdeg(p)< \min\{k:p^{(k)}\text{ is not weakly $\C$-orthogonal}\}.$$
\end{proposition}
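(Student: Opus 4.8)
The plan is to run the same mechanism as Proposition~\ref{wo}, but to exploit that the constants form an infinite \emph{field} in order to push the witnessing forking extension down by one level. Write $k$ for the right-hand side, so that $p^{(k-1)}$ is weakly $\C$-orthogonal while $p^{(k)}$ is not (such a $k$ exists since $p$ is $\C$-internal and nonalgebraic). If $U(p)\le 1$ then $\nmdeg(p)=0$, while $k\ge 2$ because $p$ itself is weakly $\C$-orthogonal, so we may assume $n:=U(p)\ge 2$. The goal is then to produce a Morley sequence $a_1,\dots,a_{k-1}$ in $p$ and a realisation $b\models p$ with $0<U(b/Aa_1\cdots a_{k-1})<n$: this is a nonalgebraic forking extension of $p$ over only $k-1$ realisations, giving $\nmdeg(p)\le k-1<k$.

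First I would extract a single new constant of rank one. As in Proposition~\ref{wo}, failure of weak orthogonality of $p^{(k)}$ yields a Morley sequence $a_1,\dots,a_k$ and an element of $\dcl(Aa_1\cdots a_k)\cap\C$ not in $\acl(A)$; since $\C$ is strongly minimal I may take a single $c\in\dcl(Aa_1\cdots a_k)\cap\C$ with $U(c/A)=1$. Setting $A_0:=Aa_1\cdots a_{k-1}$, weak orthogonality of $p^{(k-1)}$ forces $c\notin\acl(A_0)$, whence $U(c/A_0)=1$ and, by the Lascar equality (using $c\in\dcl(A_0a_k)$), $U(a_k/A_0c)=n-1$. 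As $c\in\dcl(A_0a_k)$ I may write $c=h(a_k)$ for an $A_0$-definable function $h$; after restricting to the cofinite $A_0$-definable subset of $S:=p(\U)$ on which it is $\C$-valued, I regard $h\colon S\to\C$.

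Second, I would analyse the fibres of $h$. Since $U\bigl(h(a_k)/A_0\bigr)=U(c/A_0)=1$ and $a_k$ is generic over $A_0$, the image of $h$ is a rank-one, hence cofinite, subset of $\C$, and the generic fibre has Morley rank $n-1$ (it contains $a_k$ with $U(a_k/A_0c)=n-1$, and is a proper subset of the rank-$n$, Morley-degree-one set $S$). By definability of Morley rank, $D:=\{c'\in\C:\RM\bigl(h^{-1}(c')\bigr)=n-1\}$ is $A_0$-definable and cofinite in $\C$. The decisive point, and the one where the structure of the constants is used essentially, is that $\C$ is an infinite field: its relative algebraic closure $\acl(A_0)\cap\C$ is infinite, hence meets the cofinite set $D$. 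Fixing $c_0\in D\cap\acl(A_0)$, the fibre $h^{-1}(c_0)$ is $\acl(A_0)$-definable of Morley rank $n-1$, so its generic point $b$ over $A_0$ satisfies $b\models p$ and $U(b/A_0)=n-1$. As $0<n-1<n$, the type $\tp(b/Aa_1\cdots a_{k-1})$ is a nonalgebraic forking extension of $p$, giving $\nmdeg(p)\le k-1$, as desired.

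I expect the main obstacle to be exactly this descent to an $A_0$-algebraic value $c_0$ of $h$: the natural fibration produces forking only over the enlarged base $A_0c$, which contains the constant $c$, whereas $\nmdeg$ demands a base consisting solely of realisations of $p$. In the general $r$-internal setting of Proposition~\ref{wo} one cannot remove that constant, because $\acl(A_0)\cap r(\U)$ may be finite and offer no value to specialise to; it is precisely the infinitude of $\acl(A_0)\cap\C$, i.e.\ the field structure of the constants, that makes the strict inequality available here (and, by the identical argument, in $\ccm$). The remaining points to verify are routine in the totally transcendental, internal-to-a-minimal-type setting: that $h$ may be taken $\C$-valued on a cofinite subset of $S$, that $U$ and $\RM$ agree on definable subsets of $S$, and that $\RM$ of the fibres of $h$ is $A_0$-definable.
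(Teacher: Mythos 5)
Your overall strategy is the same as the paper's: produce the constant-valued fibration $c=h(a_k)$ over $A_0=Aa_1\cdots a_{k-1}$, then trade the generic constant for one algebraic over $A_0$, exploiting that $\acl(A_0)\cap\C$ is infinite (it contains $\overline{\mathbb{Q}}$). The genuine weak point is the step where you pass to the set $D=\{c'\in\C:\RM(h^{-1}(c'))=n-1\}$ ``by definability of Morley rank'': there is no such fact to cite in $\DCF_0$, where Morley rank is badly behaved (it can differ from $U$-rank, and its definability in definable families is not available in general). The repair you gesture at does work, but it is a real argument, not a routine check: one must use the $\C$-internality of $S$ to transfer the family $\{h^{-1}(c')\}$, via an injection of $S$ into some $\C^l$ defined over a larger parameter set $E$, to a family definable in the pure algebraically closed field $\C$, where dimension \emph{is} definable; then $D$ is $E$-definable, and being $\aut(\U/A_0)$-invariant it is $A_0$-definable. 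The paper's proof is engineered to avoid this issue entirely: it shrinks the image $Y\subseteq\C$ so that every fibre is infinite (elimination of $\exists^\infty$ in $\DCF_0$) and of rank $<n$ (the fibres are pairwise disjoint, so by Morley degree one of $S$ at most one fibre has full rank, and that exceptional value lies in $\dcl(A_0)$ and can be discarded); after that, \emph{any} point of $Y$ lying in $\acl(A_0\cap\C)$ does the job, with no appeal to rank-definability.

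A second, smaller, slip: your parenthetical reason that the generic fibre has rank exactly $n-1$ --- that it ``is a proper subset of the rank-$n$, Morley-degree-one set $S$'' --- is not valid, since a proper definable subset of a degree-one rank-$n$ set can still have rank $n$ (e.g.\ the complement of a point). The correct argument: if $\RM(h^{-1}(c))=n$, pick $a'\in h^{-1}(c)$ with $\RM(a'/A_0c)=n$; since $U$-rank and Morley rank agree on $S$, this gives $a'\ind_{A_0}c$, yet $c=h(a')\in\dcl(A_0a')$, forcing $c\in\acl(A_0)$, a contradiction (equivalently, the fibres over the $A_0$-conjugates of $c$ form infinitely many pairwise disjoint sets, so degree one rules out any of them having full rank). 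Also, ``cofinite'' should be ``of full Morley rank'' for the domain of $h$; cofiniteness is neither available nor needed. With these repairs your argument is correct and coincides in substance with the paper's proof.
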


\begin{proof}
Since $p$ is weakly $\C$-orthogonal and $\C$-internal, it is isolated.
So $S:=p(\U)$ is an $A$-definable set.
Since $p$ is nonalgebraic and $\C$-internal, there is $k>0$ such that $p^{(k)}$ is not weakly $\C$-orthogonal.
Let $k$ be minimal such, fix $(a_1,\dots,a_k)\models p^{(k)}$, and let $B:=Aa_1,\dots,a_{k-1}$.
We have seen, in the proof of Proposition~\ref{wo}, that $\tp(a_k/B)$ admits a proper fibration $\tp(b/B)$ where $b\in\C$.
Write $b=f(a_k)$  where $f:X\to Y$ is a $B$-definable surjective function with $a_k\in X\subseteq S$ and $b\in Y\subseteq\C$.

Since $a_k\notin\acl(Bb)$, the fibre $f^{-1}(b)$ is not finite.
By elimimination of the infinity quantifier, we may, by shrinking $Y$ if necessary, assume that none of the fibres of $f$ are finite.
On the other hand, since by stationarity at most one of the fibres can have Morley rank equal to that of $S$, and $f$ has infinitely many fibres as $b\notin\acl(B)$, we may shrink $Y$ further if necessary so that all the fibres are of Morley rank strictly less than~$\RM(S)$.
It follows that for every $c\in Y$ there is $a\in f^{-1}(c)$ such that $\tp(a/Bc)$ is a nonalgebraic forking extension of $\tp(a/A)=p$.

Now, by stable embeddability, $Y$ is $(B\cap\C)$-definable in the pure field $(\C,+,\times)$, and hence must have an $\acl(B\cap\C)$-point, say $c$.
Let $a\in f^{-1}(c)$ be such that $\tp(a/Bc)$ is a nonalgebraic forking extension of $p$.
As $c\in\acl(Aa_1,\dots,a_{k-1})$, we have that $\tp(a/Aa_1,\dots,a_{k-1})$ is a nonalgebraic forking extension of $p$.
That is, $\nmdeg(p)\leq k-1$.
\end{proof}

\begin{theorem}
\label{bound-dcf}
For every stationary finite rank type $p\in S(A)$ in $\DCF_{0}$,
$$\nmdeg(p)\leq U(p)+1.$$
\end{theorem}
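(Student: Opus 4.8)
**

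The plan is to combine the conditional bound from Theorem~\ref{bound}(b) with the improved strict inequality of Proposition~\ref{wodcf}, exploiting the fact that in $\DCF_0$ every nonmodular minimal type is nonorthogonal to the constants $\C$, so that $\ACF_0$ is the site of all nonmodular minimal types and Conjecture~\ref{BCConj} holds there by Theorem~\ref{BCacf}. First I would dispose of the trivial cases: if $U(p)\leq 1$ then $\nmdeg(p)=0$ by definition, so I may assume $U(p)=:n>1$, and I may also assume $d:=\nmdeg(p)>1$ since otherwise the bound is immediate.

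Next I would run the reduction of Proposition~\ref{gtbound}. Since $d>1$, Proposition~\ref{internality} gives a nonmodular minimal type to which $p$ is almost internal; because $\DCF_0$ satisfies the Zilber trichotomy, this minimal type is nonorthogonal to $\C$, so in fact $p$ is almost $\C$-internal. As in the proof of Proposition~\ref{gtbound}, I pass to an interalgebraic $q\in S(A)$ that is outright $\C$-internal, with $\nmdeg(q)=d$ by Lemma~\ref{basicprop}(c). The key point now is that $q$ is weakly $\C$-orthogonal: letting $k$ be least with $q^{(k)}$ not weakly $\C$-orthogonal, Proposition~\ref{wo} gives $k\geq d>1$, so in particular $q$ itself is weakly $\C$-orthogonal, and I may apply the sharper Proposition~\ref{wodcf} to conclude $\nmdeg(q)\leq k-1$, i.e.\ $d\leq k-1$.

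With this strict gain in hand, I would reconstruct the binding group action exactly as in Proposition~\ref{gtbound}: the connected component $G:=\aut_A(q/\C)^\circ$ acts transitively on $S:=q(\U)$ and, by minimality of $k$, also transitively on the generic $\mc O:=q^{(k-1)}(\U)\subseteq S^{k-1}$, so $(G,S)$ is a definable homogeneous space that is generically $(k-1)$-transitive with $\RM(S)=U(q)=n$. Since all binding group actions of $\C$-internal types in $\DCF_0$ are definably isomorphic to algebraic group actions in $\ACF_0$, Conjecture~\ref{BCConj} is available via Theorem~\ref{BCacf}, and by the argument of Theorem~\ref{bound}(b) it forces $k-1\leq n+2$, hence $k\leq n+3$. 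Combining with $d\leq k-1$ yields $d\leq n+2$; but I want the sharper $d\leq n+1$. The extra saving must come from the strict inequality: $d\leq k-1$ together with the Borovik-Cherlin bound $k-1\leq n+2$ gives only $d\leq n+2$, so I would instead argue that $d=n+2$ cannot occur by rerunning the $d=n+3$ elimination step of Theorem~\ref{bound}(b) one level lower. Concretely, if $d=n+2$ then $k-1\geq d=n+2$, so $(G,S)$ is generically $(n+2)$-transitive, Theorem~\ref{BCacf} identifies it with $\big(\PGL_{n+1},\PP^n\big)$, and the stabiliser of two generic points produces a nonalgebraic forking extension over three parameters, giving $\nmdeg(q)\leq 3<n+2=d$ (as $n>1$), a contradiction.

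The main obstacle I expect is getting the precise bookkeeping of the strict inequality right: Proposition~\ref{wodcf} improves $\nmdeg(p)\leq k$ to $\nmdeg(p)\leq k-1$, and one must verify that this single unit of slack, fed through the Borovik-Cherlin identification of the generically $(n+2)$-transitive action as $\big(\PGL_{n+1},\PP^n\big)$, genuinely rules out the boundary value $d=n+2$ rather than merely $d=n+3$. The delicate point is that the contradiction-producing step — extracting three parameters over which $q$ forks from the $\PGL_{n+1}$-action and deducing $\nmdeg(q)\leq 3$ — requires $n>1$ to ensure $3<n+2$, so the hypothesis $U(p)>1$ is used essentially here. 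Everything else is a faithful transcription of the proofs of Proposition~\ref{gtbound} and Theorem~\ref{bound}(b), using that $\DCF_0$ satisfies Assumption~\ref{nlm} with $r=$ the generic type of $\C$.
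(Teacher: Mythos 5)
Your proposal is correct and takes essentially the same route as the paper's own proof: reduction to an interalgebraic $\C$-internal type via Proposition~\ref{internality} and the Zilber trichotomy, the extra level of generic transitivity coming from the strict inequality of Proposition~\ref{wodcf}, transfer to $\ACF_0$ by stable embeddedness of the constants so that Theorem~\ref{BCacf} applies, and the two-point-stabiliser orbit argument from Theorem~\ref{bound}(b) to dispose of the extremal case. The only difference is bookkeeping: you first derive $d\leq n+2$ and then rule out equality, whereas the paper assumes $d\geq n+2$ and derives the contradiction directly.
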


\begin{proof}
The general strategy is that of Theorem~\ref{bound}(b) but using the fact that the Borovik-Cherlin Conjecture holds of homogeneous spaces defined in the constants (by Theorem~\ref{BCacf}).
We get a bound that is one less than that of Theorem~\ref{bound}(b) because we use the strict inequality of Proposition~\ref{wodcf} in place of the non-strict one of Proposition~\ref{wo}.
But here are some details.

Let $d:=\nmdeg(p)$.
We may as well assume that $d>1$.
It follows by Proposition~\ref{internality} that $p$ is almost internal to some nonmodular minimal type, and hence by the Zilber trichotomy in $\DCF_{0}$, we have that $p$ is almost $\C$-internal.
By Lemma~\ref{basicprop}(c), interalgebraicity does not change the degree of nonmininimality, so we may assume that $p$ is $\C$-internal.
On the other hand, by Proposition~\ref{wo} and the fact that $d>1$, we must have that $p$ is weakly $\C$-orthogonal.
So Proposition~\ref{wodcf} applies.
Following the proof of Proposition~\ref{gtbound}, but using~\ref{wodcf} instead of~\ref{wo}, we see that if we let $G=\aut_A(p/\C)^\circ$ be the connected component of the binding group, and $S:=p(\U)$, then $(G,S)$ is a definable homogeneous space that is generically $d$-transitive.
The improvement here over Proposition~\ref{gtbound} is that we have obtained one higher level of generic transitivity.

Now, $(G,S)$ is definably isomorphic (over possibly additional parameters) to some $(\widehat G,\widehat S)$ definable in $(\C,+,\times)$.
This is because $(\C,+,\times)$ is purely stably embedded.
So $(\widehat G,\widehat S)$ is a connected homogeneous space definable in $\ACF_0$ with $\dim\widehat S=U(p)=:n>0$
that is generically $d$-transitive.

Suppose toward a contradiction that $d\geq n+2$.
So, by Theorem~\ref{BCacf},  $(\widehat G,\widehat S)$, and hence $(G,S)$, is definably isomorphic to $\big(\PGL_{n+1}(\C),\PP^n(\C)\big)$.
But, as in the proof of Theorem~\ref{bound}(b), this implies that $d\leq 3$ which would force $n\leq 1$ and hence $d=0$ by convention.
This contradiction proves that $d\leq n+1$, as desired.
\end{proof}

Note that while we obtain the desired bounds on degree of nonminimality in $\DCF_{0}$, we do not resolve the Borovik-Cherlin conjectures in this theory.

\bigskip
\section{Compact complex manifolds}
\label{section-bcccm}
\noindent
The arguments we gave to prove the bound on nonminimality in $\DCF_{0}$ work in any theory where the Zilber trichotomy holds; namely, where there is a stably embedded pure algebraically closed field of characteristic zero to which every nonmodular minimal type is nonorthogonal.
So we obtain the analogue of Theorem~\ref{bound-dcf} for the theory of compact complex manifolds:

\begin{theorem}
\label{bound-ccm}
For every stationary finite rank type $p\in S(A)$ in $\ccm$,
$$\nmdeg(p)\leq U(p)+1.$$
\end{theorem}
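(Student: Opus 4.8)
The plan is to transcribe the proof of Theorem~\ref{bound-dcf} almost verbatim, replacing the field of constants $\C$ throughout by the projective line sort $\mathbb P$, which in $\ccm$ is the stably embedded pure algebraically closed field of characteristic zero to which all nonmodular minimal types are nonorthogonal. The theory-specific facts I would draw on are exactly those that make $\mathbb P$ behave like $\C$ did in $\DCF_0$: the Zilber trichotomy for $\ccm$ (every nonmodular minimal type is nonorthogonal to $\mathbb P$), and the full stable embeddedness of $\mathbb P$, whose induced structure is precisely that of a pure algebraically closed field. The latter ensures both that binding group actions coming from $\mathbb P$-internality are definably isomorphic to actions definable in the pure field, and that nonempty definable subsets of powers of $\mathbb P$ are constructible, hence have $\acl$-points over their parameters.

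First I would prove the $\ccm$-analogue of Proposition~\ref{wodcf}: for $p$ stationary, $\mathbb P$-internal and weakly $\mathbb P$-orthogonal, one has $\nmdeg(p)<\min\{k:p^{(k)}\text{ is not weakly }\mathbb P\text{-orthogonal}\}$. The argument should be line-for-line the same as for $\DCF_0$. From a proper fibration $\tp(b/B)$ with $b\in\mathbb P$ one obtains a $B$-definable surjection $f\colon X\to Y\subseteq\mathbb P$; using elimination of the infinity quantifier (available since $\ccm$ is totally transcendental) one shrinks $Y$ so that every fibre is infinite and of Morley rank below $\RM(S)$; and using stable embeddedness one locates an $\acl(B\cap\mathbb P)$-point $c\in Y$, whose fibre $f^{-1}(c)$ then supplies a nonalgebraic forking extension over $Aa_1,\dots,a_{k-1}$.

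With this strict inequality available, the rest follows Theorem~\ref{bound-dcf} exactly. Setting $d:=\nmdeg(p)$ and assuming $d>1$, Proposition~\ref{internality} gives almost internality of $p$ to a nonmodular minimal type; the trichotomy upgrades this to almost $\mathbb P$-internality; and Lemma~\ref{basicprop}(c) lets me assume $p$ itself is $\mathbb P$-internal. Proposition~\ref{wo} together with $d>1$ forces weak $\mathbb P$-orthogonality, so the strengthened inequality applies and shows that $(G,S)$, with $G:=\aut_A(p/\mathbb P)^\circ$ and $S:=p(\U)$, is generically $d$-transitive. Transferring $(G,S)$ to an action $(\widehat G,\widehat S)$ definable in the pure field and invoking Theorem~\ref{BCacf}, I would argue that the assumption $d\geq n+2$ (where $n:=U(p)=\dim\widehat S$) forces $(\widehat G,\widehat S)\cong(\PGL_{n+1},\PP^n)$; then, exactly as in the proof of Theorem~\ref{bound}(b), the stabiliser of two points produces a nonalgebraic forking extension witnessing $\nmdeg(p)\leq 3$, so that $n+2\leq d\leq 3$ forces $n\leq 1$, contradicting $d>1$. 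Hence $d\leq U(p)+1$.

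The proof introduces no new model-theoretic difficulty; everything substantive is already contained in Theorem~\ref{BCacf} and in the $\DCF_0$ propositions. The only point needing care --- and the nearest thing to an obstacle --- is checking that each tool the $\DCF_0$ argument relies upon survives the move to $\ccm$: the trichotomy, the full stable embeddedness of $\mathbb P$ as a pure field (used both to push the binding group action into $\ACF_0$ and to find the $\acl$-point above), and elimination of the infinity quantifier. These are standard features of $\ccm$ and are to be cited rather than reproved.
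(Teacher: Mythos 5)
Your proposal is correct and is exactly the paper's approach: the paper's own proof of Theorem~\ref{bound-ccm} consists of the single observation that the argument for Theorem~\ref{bound-dcf} works in any theory possessing a stably embedded pure algebraically closed field of characteristic zero to which every nonmodular minimal type is nonorthogonal, with the details explicitly left to the reader---details you have filled in correctly. One caution: elimination of the quantifier $\exists^\infty$ is \emph{not} a consequence of total transcendence alone (totally transcendental theories with the finite cover property can fail it), so in $\ccm$ you should cite it as a theory-specific fact---it follows, for instance, from definability of fibre dimension in definable families, via the Noetherian Zariski topologies on the sorts---rather than deriving it from total transcendence.
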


\noindent
We leave it to the reader to check the details.
But it turns out that, unlike in $\DCF_{0}$, we can prove the Borovik-Cherlin conjecture itself in $\ccm$.
Indeed, the proof for $\ACF_0$ that we gave in Section~\ref{section-bcalg} works more generally for groups definable in $\ccm$.
We give a few brief explanations here.

First of all, $\ccm$ is the theory of the multi-sorted structure $\mathcal A$ where there is a sort for each compact complex analytic space and where the language consists of a predicate for each closed analytic subset of a finite cartesian product of sorts.
The theory has many nice properties: it admits the elimination of quantifiers and imaginaries, each sort is of finite Morley rank, and a Zilber trichotomy holds of the strongly minimal sets. 
It is an expansion of $\ACF_0$ in the sense that the complex field $(\mathbb C,+,\times)$ is purely stably embedded in the sort of the projective line.
The standard model is not saturated, and some complications arise from the fact that one has to work in a sufficiently saturated elementary extension $\mathcal A'\succeq\mathcal A$ whose sorts are not complex analytic spaces.
Each sort is, however, endowed with a noetherian {\em Zariski topology} whose closed sets come from definable families of closed anlaytic sets in the standard model.
See~\cite{ccs-survey} for a detailed survey of the subject.

The groups definable in $\ccm$ are well understood -- they are precisely the {\em meromorphic groups} studied in~\cite{fujiki-mergroups, pillay-scanlon-mergroups, ams-mergroups, scanlon-mergroups}.
In the standard model they are a natural generalisation of algebraic groups to the meromorphic category: complex Lie groups with a finite covering by Zariski open subsets of irreducible compact complex spaces such that the transition maps and the group operation extend to meromorphic maps.
This notion was extended to the saturated universe~$\mathcal A'$ in~\cite[Definition~4.3]{ams-mergroups}, which we leave the reader to consult for a precise definition.
In any case, every group definable in $\mathcal A'$ admits the structure of a meromorphic group.
Coming out of~\cite{fujiki-mergroups, pillay-scanlon-mergroups, ams-mergroups, scanlon-mergroups} is a Chevellay-type structure theorem for meromorphic groups that allows the arguments of the previous section to extend from $\ACF_0$ to $\ccm$.

\begin{theorem}
\label{BCccm}
Conjecture~\ref{BCConj} holds of homogeneous spaces definable in $\ccm$.
\end{theorem}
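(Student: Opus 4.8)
The plan is to transfer the proof of Theorem~\ref{BCacf} from $\ACF_0$ to $\ccm$ essentially line by line, with ``algebraic group'' replaced throughout by ``meromorphic group'' and the Zariski topology of $\ACF_0$ replaced by the noetherian Zariski topology on the sorts of $\mathcal A'$. First I would observe that, since every group definable in $\mathcal A'$ carries the structure of a meromorphic group (by the meromorphic analogue of the Weil group-chunk theorem, as developed in~\cite{ams-mergroups}), we may assume that $\alpha\colon G\times S\to S$ is a faithful, transitive, generically $(n+2)$-transitive meromorphic action of a meromorphic group on an irreducible compact complex space with $n=\RM(S)>0$, and we aim to prove it is isomorphic to $\big(\PGL_{n+1},\PP^n\big)$. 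The induction is again on $n$. For the base case $n=1$, the irreducible one-dimensional compact complex spaces are projective curves, so the only meromorphic homogeneous spaces available are those already classified in the $\ACF_0$ argument together with the regular action of a one-dimensional complex torus on itself; of these only $\big(\PGL_2,\PP^1\big)$ is generically $3$-transitive, so the base case reduces to the algebraic one.

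Next I would carry out the reduction to the definably primitive case exactly as before: transitivity together with the induction hypothesis and the simple-connectedness of $\PP^n$ lets one pass to $(G,S)=(G,G/H)$ with $H$ a maximal proper definable subgroup (definable subgroups of a meromorphic group being Zariski-closed, this is the same as $(G,G/H)$ being definably primitive). I would then apply the O'Nan--Scott theorem of Macpherson and Pillay~\cite[Theorem~1.1]{macpherson1995primitive}, which is a statement about arbitrary finite Morley rank groups and hence applies verbatim in $\ccm$, to obtain the definable socle $B$ and the same list of cases~1, 2, 3, 4a(i), 4a(ii), 4b, with connectedness ruling out 4a(i) and 4b just as in $\ACF_0$.

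Finally I would dispatch the remaining cases. In case~1 the socle $B$ is abelian: the elementary abelian $p$-group possibility is excluded because an infinite meromorphic group, being a divisible complex Lie group, has no definable infinite subgroup of exponent~$p$, while the torsion-free divisible subcase is killed by the general finite Morley rank bound $\RM(G)\le n^2+n$ of~\cite[Lemma~3.2]{BC2008} against the inequality $\RM(G)\ge(n+2)n$ forced by generic transitivity; both of these inputs are purely rank-theoretic and so transfer unchanged. In cases~2, 3, and~4a(ii) the socle (respectively the factors $T_1,T_2$) is a non-commutative simple definable, hence meromorphic, subgroup. Here enters the essential structural input: the Chevalley-type decomposition of a connected meromorphic group as an extension of a complex torus by a linear algebraic group forces a \emph{non-commutative simple} meromorphic group to be a simple linear algebraic group. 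Granting this, finiteness of the outer automorphism group of a simple linear algebraic group~\cite[$\S27.4$]{humphreys} together with connectedness forces $G=B$; cases~2 and~4a(ii) then contradict generic $(n+2)$-transitivity on dimension and regularity grounds exactly as in the proof of Theorem~\ref{BCacf}, leaving case~3 in which $G=B$ is a simple linear algebraic group and Theorem~\ref{BCsimplealg} applies to give $(G,S)\cong\big(\PGL_{n+1},\PP^n\big)$.

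I expect the main obstacle to be precisely this last structural reduction: extracting from the meromorphic Chevalley theory the fact that a simple non-commutative meromorphic group is linear algebraic, and verifying that the finite Morley rank lemmas borrowed from~\cite{BC2008} genuinely require nothing beyond the rank calculus available in $\ccm$. Once these are in hand, every step of the $\ACF_0$ argument goes through and Theorem~\ref{BCsimplealg} can be reused as a black box.
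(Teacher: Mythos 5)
Your strategy is exactly the paper's: rerun the proof of Theorem~\ref{BCacf} for $\ccm$, observing that the only inputs specific to the algebraic category are (i) the one-dimensional case, (ii) the exclusion of infinite elementary abelian $p$-groups, and (iii) the fact that a simple noncommutative definable group is a simple linear algebraic group, and then supply these from the theory of meromorphic groups. The reduction via the group-chunk theorem, the passage to the definably primitive case, the use of the Macpherson--Pillay O'Nan--Scott theorem (which is indeed a pure finite Morley rank statement), and the final case analysis all match the paper's proof.

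The genuine gap is in how you verify (i)--(iii): you argue in the standard model, but the proof must be carried out in a sufficiently saturated elementary extension $\mathcal A'\succeq\mathcal A$, whose sorts are \emph{not} compact complex analytic spaces --- precisely the complication the paper warns about. Concretely: a one-dimensional definable homogeneous space in $\mathcal A'$ is not an ``irreducible one-dimensional compact complex space'', so your base case does not stand as written; the paper instead invokes the nonstandard Riemann existence theorem of~\cite{ret} (every one-dimensional definable set in $\mathcal A'$ is algebraic, and $\mathbb C(\mathcal A')$ is the only infinite definable field) together with the classification of strongly minimal homogeneous spaces~\cite[1.6.25]{GST}. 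Similarly, a meromorphic group in $\mathcal A'$ is not ``a divisible complex Lie group'', so your exclusion of elementary abelian $p$-groups is unjustified as stated; the paper transfers the question to the standard model --- every meromorphic group in $\mathcal A'$ is a nonstandard fibre of a definable family of standard meromorphic groups --- and then applies the Chevalley-type theorem of~\cite{pillay-scanlon-mergroups} to standard commutative meromorphic groups. Finally, for (iii), the decomposition you quote (extension of a complex torus by a linear algebraic group) is again the standard-model statement; in $\mathcal A'$ one must use the nonstandard Chevalley-type theorem of~\cite{ams-mergroups, scanlon-mergroups}, namely that every meromorphic group is an extension of a commutative \emph{definably compact} meromorphic group by a linear algebraic group, from which simplicity does yield linear algebraic or commutative, as you want. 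So your outline is the right one, but each of the three pillars needs its nonstandard replacement before the proof is complete.
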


\begin{proof}
Inspecting the proof of Theorem~\ref{BCacf} we see that all we require is the following three facts about meromorphic groups:
\begin{enumerate}
\item
all meromorphic homogeneous spaces $(G,S)$, with $\dim S=1$, are algebraic,
\item
no infinite meromorphic group is an elementary abelian $p$-group, and
\item
every simple noncommutative meromorphic group is a simple linear algebraic group.
\end{enumerate}
Indeed, given these, the reduction in~$\S$\ref{reducetosimple} to simple linear algebraic groups goes through verbatum.

So it remains to verify the above properties of meromorphic groups.

In $\mathcal A'$ every $1$-dimensional set is algebraic; this is the nonstandard Riemann existence theorem of~\cite{ret}.
It is also shown there that $\mathbb C(\mathcal A')$ is the only infinite field definable in $\mathcal A'$.
Property~(1) follows from these facts together with the classification of strongly minimal homogeneous spaces (see~\cite[1.6.25]{GST}).

In $\mathcal A'$ every meromorphic group arises as a nonstandard fibre of a definable family of meromorphic groups in the standard model.
So if there did exist an infinite meromorphic group that is an elementary abelian  $p$-group, then there would exist a standard one.
By the Chevellay-type structure theorem of~\cite{pillay-scanlon-mergroups}, every standard commutative meromorphic group is an extension of a complex torus by a commutative complex linear algebraic group, and hence is not an elementary abelian $p$-group.
This verifies~(2).

Finally, for~(3), we apply the nonstandard Chevellay-type structure theorem for meromorphic groups in $\mathcal A'$ established in~\cite{ams-mergroups, scanlon-mergroups}.
This says that every meromorphic group is the extension of a (commutative) definably compact\footnote{We mean this as a group interpretable in $\mathbb R_{\operatorname{an}}$, but in fact one can say more, see the discussion in the Introduction to~\cite{ams-mergroups}.}
meromorphic group by a linear algebraic group.
In particular, if $G$ is a simple meromorphic group then it is either linear algebraic or commutative.
\end{proof}

\bigskip


\end{document}